\newcommand{\br}[3]{{$#1$}$\lower4pt\hbox{$\tp\atop\raise4pt \hbox{$\scriptscriptstyle{#2}$}$} ${$#3$}}
\newcommand{\tw}[3]{{$#1$}${\,\scriptscriptstyle {#2}}\atop\raise9pt\hbox{$\scriptstyle\tp$} ${$#3$}}
\newcommand{\ttps}[2]{{#1}\raise5pt\hbox{$\lower12pt\hbox{$\scriptstyle\tp$}\atop \lower0pt\hbox{$\tilde\;$}$}\raise4.5pt\hbox{${\scriptstyle{#2}}$}}
\newcommand{\st}[1]{\mbox{${\,\scriptscriptstyle {#1}}\atop\raise5.5pt\hbox{$*$}$}}
\newcommand{\rd}[1]{\mbox{${\,\scriptscriptstyle {#1}}\atop\raise5.5pt\hbox{$\bullet$}$}}
\newcommand{\rt}[1]{\otimes_\chi}
\newcommand{\lt}[1]{\mbox{${\,\scriptscriptstyle {#1}}\atop\raise5.5pt\hbox{$\ltimes$}$}}
\newcommand{\btr}{\raise1.2pt\hbox{$\scriptstyle\blacktriangleright$}\hspace{2pt}}
\newcommand{\btl}{\raise1.2pt\hbox{$\scriptstyle\blacktriangleleft$}\hspace{2pt}}
\newcommand{\lcr}{\raise1.0pt \hbox{${\scriptstyle\rightharpoonup}$}}
\newcommand{\rcr}{\raise1.0pt \hbox{${\scriptstyle\leftharpoonup}$}}
\newcommand{\ttp}{{\lower12pt\hbox{$\tp$}\atop \hbox{$\tilde\;$}}}
\newcommand{\id}{\mathrm{id}}
\newcommand{\Arr}{{\mathrm{Arr}}}
\newcommand{\Cg}{\mathfrak{C}}
\newcommand{\gr}{ \mathrm{gr }}
\newcommand{\Ru}{\mathcal{R}}
\newcommand{\Q}{\mathcal{Q}}
\newcommand{\C}{\mathbb{C}}
\newcommand{\Z}{\mathbb{Z}}
\newcommand{\tp}{\otimes}
\newcommand{\vt}{\vartheta}
\newcommand{\ve}{\varepsilon}
\newcommand{\dt}{\delta}
\newcommand{\op}{\oplus}
\newcommand{\la}{\lambda}
\newcommand{\End}{\mathrm{End}}
\newcommand{\Tr}{\mathrm{Tr}}
\newcommand{\Rm}{\mathrm{R}}
\newcommand{\g}{\mathfrak{g}}
\renewcommand{\b}{\mathfrak{b}}
\renewcommand{\k}{\mathfrak{k}}
\newcommand{\h}{\mathfrak{h}}
\newcommand{\s}{\mathfrak{s}}
\renewcommand{\o}{\mathfrak{o}}
\newcommand{\nn}{\nonumber}
\newcommand{\p}{\mathfrak{p}}
\renewcommand{\l}{\mathfrak{l}}
\renewcommand{\c}{\mathfrak{c}}
\newcommand{\si}{\sigma}
\newcommand{\al}{\alpha}
\newcommand{\bt}{\beta}
\newcommand{\be}{\begin{eqnarray}}
\newcommand{\ee}{\end{eqnarray}}
\newtheorem{thm}{Theorem}[section]
\newtheorem{propn}[thm]{Proposition}
\newtheorem{lemma}[thm]{Lemma}
\newtheorem{corollary}[thm]{Corollary}
\newtheorem{definition}[thm]{Definition}
\newcommand{\parag}{\advance\prg by1 {\noindent\bf\thesection.\the\prg\hspace{6pt}}}
\begin{document}
\title{Representations of quantum conjugacy classes of orthosymplectic groups}
\author{
Thomas Ashton and Andrey Mudrov \vspace{20pt}\\
\small Department of Mathematics,\\ \small University of Leicester, \\
\small University Road,
LE1 7RH Leicester, UK\\
\small e-mail: am405@le.ac.uk\\
%[0.3in]\select{DRAFT}
}

\date{}
\maketitle

\begin{abstract}
Let $G$ be the complex  symplectic or special orthogonal group and
 $\g$ its  Lie algebra.
With every  point $x$ of the maximal torus $T\subset G$ we associate a highest weight module $M_x$ over the Drinfeld-Jimbo quantum
group $U_q(\g)$ and a quantization of the conjugacy class of $x$ by operators in $\End(M_x)$.
These quantizations
are isomorphic for $x$ lying on the same orbit of the Weyl group, and $M_x$ support different representations of the same quantum conjugacy class.
\end{abstract}

{\small \underline{Mathematics Subject Classifications}: 81R50, 81R60, 17B37.
}

{\small \underline{Key words}: Quantum groups, deformation quantization, conjugacy classes, representations.
}

%\newpage

\section{Introduction}
This paper is a sequel of a series of works on quantization of semisimple conjugacy classes of a non-exceptional simple Poisson
group $G$, \cite{DM}-\cite{AM1}.
It is done in the spirit of \cite{AM} devoted to  $G=SL(n)$ and can be viewed as a uniform approach to quantization
that includes the results of \cite{DM}-\cite{AM1} as a special case. The earlier constructed quantum conjugacy classes were realized by operators on certain modules of the quantized universal
enveloping algebra $U_q(\g)$ of the Lie algebra $\g$ of the group $G$.
For a large number of examples, this theory is parallel to the $U(\g)$-equivariant quantization of semisimple adjoint orbit in $\g\simeq \g^*$, \cite{DM,DGS,O}.
In both cases, $G$ and $\g$, the quantized algebra of  polynomial functions is represented on parabolic Verma modules, respectively, over
$U_q(\g)$ and $U(\g)$.
However, adjoint orbits in $G$ are in a greater supply than in $\g$.
Quantization of some of them requires more general modules, which cannot be obtained by induction from a character of the parabolic extension of
the stabilizer, \cite{M3,M4}. Moreover, the latter itself disappears as a natural subalgebra in $U_q(\g)$.
This observation makes us take a more general look at already constructed quantum homogeneous spaces and conclude that they
were obtained through a very special choice of the initial point. Such points are distinguished by their isotropy subgroups, whose triangular decomposition perfectly matches the triangular decomposition of $G$. All they are of Levi type, as for semisimple orbits in $\g$, and their basis of simple positive roots of is a part of
the basis of the total group.  That is violated for stabilizers of non-Levi type appearing among conjugacy classes in $G$. At the same time,
one can apply a generic Weyl group transformation to the initial point in $\g$ and break the nice inclusion of root bases in the Levi case. In this respect,
a generic initial point whose stabilizer is isomorphic to a Levi subgroup  has much similarity with essentially non-Levi one. It makes sense therefore to
extend the original approach to quantization and consider all points on the maximal torus (the Cartan subalgebra) for initial. They belong
to the same conjugacy class if and only if they lie on the same orbit of the Weyl group.
We associate a module of highest weight  with every such point
and realize the quantization of its cojugacy class  by linear operators on that module.
Points on the same Weyl group orbit give rise to isomorphic quantizations, which can be regarded as different representations of the same quantum homogeneous space.
They  can also  be thought of
as different polarizations of the same algebra.

There are other interesting problems related to quantum homogeneous spaces, such as quantization of  associated vector bundles,
star product formulation etc. That is well understood for classes with Levi isotropy subgroups,
through the mechanism of parabolic induction, \cite{DM1}-\cite{KST}. At the same time, the difference between Levi and non-Levi conjugacy classes is qualitative, and alternative representations of Levi classes could be a bridge between the two cases. A uniform approach to quantization may
help to understand the non-Levi case too.

\subsection{Preliminaries}
Let $G$ be the complex orthogonal or symplectic connected algebraic group of $N\times N$-matrices preserving a
 non-degenerate skew-diagonal symmetric or, respectively, symplectic form on $\C^N$.
Given a basis $\{w_i\}_{i=1}^N \in \C^N$, we fix the triangular decomposition $\g=\g_-\op \h\op \g_+$ so that the Cartan subalgebra is represented by diagonal matrices, while the nilpotent subalgebras $\g_\pm$ by strictly upper $(+)$ and lower $(-)$ triangular matrices. The basis elements $w_i$ carry weights $\ve_i\in \h^*$ satisfying  $\ve_{i'}=-\ve_i$, where $i'=N+1-i$.
Fix the inner product $(.,.)$ on $\h^*$ so that the weights with $i\leqslant \frac{N}{2}$ form an orthogonal basis.
Let $n$ designate the rank of $\g$. We choose  a basis $\Pi^+$ of simple roots in $\h^*$ as $\al_i=\ve_i-\ve_{i+1}$, $i<n$, and $\al_i=\ve_n$, $\al_i=2\ve_n$, $\al_i=\ve_{n-1}+\ve_n$ for, respectively,
$\g=\s\o(2n+1)$, $\g=\s\p(2n)$, and $\g=\s\o(2n)$. Denote by  $\Rm$ and $\Rm^+$ the sets of
all and positive roots of $\g$. When we need to distinguish the roots systems of a subgroup, we mark it with
the corresponding subscript.

Denote by $T$ the maximal torus of $G$ exponentiating the Cartan subalgebra $\h\subset \g$.
Given a point $x\in T$, denote by  $K\subset G$ its centralizer subgroup with the Lie algebra $\k$, which
 is a reductive subalgebra of maximal rank in $\g$.
The triangular decomposition of $\g$ induces a triangular decomposition $\k=\k_+\op \h\op \k_-$.
There are inclusions $\Rm_\k\subset \Rm_\g$ and $\Rm_\k^+\subset \Rm_\g^+$,
but not $\Pi^+_\k\subset \Pi^+_\g$ in general. If the latter holds,
$K$ is said to be a regular Levi subgroup of $G$.
If $K$ is not isomorphic to a Levi subgroup, we call it pseudo-Levi. We call it regular
if a maximal Levi subgroup among those contained in $K$ is regular.
Similar terminology is used for its Lie algebra $\k$.
Collectively we call $K$ and $\k$ generalized Levi subgroups and subalgebras.

The canonical inner product $(.,.)$ on the dual vector space $\h^*$ identifies it with $\h$. Let $h_\la\in \h$ denote
the image of $\la \in \h^*$ under this isomorphism.
Fix a generalized  Levi subalgebra $\k\subset \g$. By $\c_\k^*$ we denote the set of weights $\la\in \h^*$ such that
$(\la,\al)=0$ for all $\al \in \Rm_\k$ and by
$\c^*_{\k,reg} \subset \c^*_\k$ the set of weights such that $(\la,\al)=0 \Leftrightarrow \al \in \Rm_\k$.
For each $\la \in \c^*_\k$ the element $e^{2h_\la}\in G$ commutes with $K$,  and $\k$ is exactly the centralizer
Lie algebra
of $x=e^{2h_\la}$ once $\la \in \c^*_{\k,reg}$.

%The conjugacy class $O_x$ of $x=e^{2h_\la}$ is determined by the eigenvalues of the matrix $\pi(e^{2h_\la})$.
The coordinate ring $\C[O_x]$ is a quotient of  $\C[G]$ by a certain $G$-invariant ideal.
To describe this ideal, observe that $x$ determines a 1-dimensional representation $\chi_x$ of the subalgebra of invariants in
$\C[G]$ (under the conjugation action).
Apart from $SO(2n)$, it is generated by traces of the matrix powers of $(X_{ij})$, where $X_{ij}$ are the coordinate functions
on $G$. In the special case of  $SO(2n)$ one has to add one more invariant that is sensible to the flip of the Dynkin diagram, in order to separate two $SO(2n)$-classes within a $O(2n)$-class whose eigenvalues are all distinct from $\pm 1$. Furthermore, the
matrix $X$, when restricted to $O_x$, satisfies an equation  $p(X)=0$ with a polynomial $p$ in one variable. The entries of the matrix $p(X)$ are polynomial functions in $X_{ij}$. The defining ideal of $O_x$ is generated by the entries of $p(X)$  over the kernel of $\chi_x$, provided $p$
is the minimal polynomial for $x$.

A pseudo-Levi subgroup $K$ contains a Cartesian product of two blocks of the same type as $G$.
They correspond to the eigenvalues $\pm 1$ of the matrix $x$, which are simultaneously present in its spectrum.
For the symplectic group,  it is $SP(2m)\times SP(2p)$, where $m, p\geqslant 1$.
For the odd orthogonal group, it is $SO(2m)\times SO(2p+1)$, where $m\geqslant 2$, $p\geqslant 0$.
For the even orthogonal group, one has $SO(2m)\times SO(2p)$, where $m,p\geqslant 2$.
The lower bounds on $m,p$ come from the isomorphism $SO(2)\simeq GL(1)$: if the multiplicities of $\pm 1$ are small, then
the isotropy subgroup stays within the Levi type. We distinguished such conjugacy classes as borderline Levi because they share some properties of both types, \cite{AM1}.

The quantized polynomial algebra $\C_\hbar[O_x]$,  $\hbar=\log q$, is described as follows. The algebra $\C[G]$ is replaced with $\C_\hbar[G]$,
which is an equivariant quantization of a special Poisson bracket on $G$, \cite{STS}. This bracket makes $G$ a Poisson-Lie homogeneous space
over the Poisson group $G$ equipped with the Drinfeld-Sklyanin bracket \cite{D}, with respect to the conjugation action. The algebra $\C_\hbar[G]$ admits
an equivariant embedding into the corresponding quantum group $U_\hbar(\g)\supset U_q(\g)$. As a subalgebra in $U_\hbar(\g)$, it is generated
by the entries of the matrix $\Q=(\pi\tp \id)(\Ru_{21}\Ru)$, where $\Ru$ is the universal R-matrix of $U_\hbar(\g)$
and $\pi$ stands for the representation homomorphism $U_q(\g)\to \End(\C^N)$.
The factor $\Ru_{21}$ is obtained by flip of the tensor legs of $\Ru$. This embedding makes a $U_q(\g)$-module into a  $\C_\hbar[G]$-module and
the representation homomorphism of $\C_\hbar[G]$ automatically $U_q(\g)$-equivariant.

The subalgebra of invariants in $\C_\hbar[G]$ coincides with its centre, which
is generated by q-traces of the matrix powers of $\Q$ (apart from the special case of $SO(2n)$, as mentioned above).
The ``quantum initial points'' can be described as follows.
Let $\rho_\k=\frac{1}{2}\sum_{\al\in \Ru_\k^+}\al$ be the Weyl vector of the isotropy subalgebra $\k$.
Let $\c_\k^*$ be the orthogonal complement to $\C\Pi^+_\k$.
Denote $\Cg^*_{\k,reg}=\frac{1}{\hbar}\c^*_{\k,reg}+\c^*_{\k}+\rho_\k-\rho$ and
 $\Cg^*_{\k}=\frac{1}{\hbar}\c^*_{\k}+\c^*_{\k}+\rho_\k-\rho$.
By construction,  $\Cg^*_{\k}$ is the subset of $\la\in \frac{1}{\hbar}\h^*\op \h^*$ such that $q^{2(\la+\rho,\al)}=q^{(\al,\al)}$  for all $q$  if  $\al \in \Pi^+_\k$ while $\Cg^*_{\k,reg}\subset \Cg^*_{\k}$ satisfies this condition only if  $\al \in \Pi^+_\k$.
% Clearly  $\lim_{q\to 1}q^{2h_\la}\in T^\k_{reg}$.
% We denote by $\Cg_\k$ and $\Cg_{\k,reg}$ the images
%of  $\Cg_\k^*$ and $\Cg_{\k,reg}^*$ under the canonical isomorphism $\frac{1}{\hbar}\h^*\op \h^* \sim \frac{1}{\hbar}\h\op \h $.

With $\la \in \Cg^*_{\k,reg}$  we associate a module $M_\la$ of highest weight $\la$,
so that the image of $\C_\hbar[G]$ in $\End(M_\la)$ is a quantization of $\C_\hbar[O_x]$.
It is a parabolic Verma module if and only if $\k$ is a regular Levi subalgebra. Irregular Levi subgroups
also appear as stabilizers of initial points in $\g$, so our approach is as well applicable to the
$U(\g)$-equivariant quantization of adjoint orbits in $\g$.

The highest weight of $M_\la$
defines a central character
of $\C_\hbar[G]$, whose kernel is expressed through q-traces of the matrix powers $\Q^k$.
The matrix $\Q$ yields an invariant operator on  $\C^N\tp M_\la$, and its minimal polynomial  is determined by module structure of the tensor product.
The annihilator of $M_\la$ is then generated by the entries of the minimal polynomial over the kernel of the central character.
The structure of  $\C^N\tp M_\la$ is the key  point of this approach, and  its analysis takes a great part of this exposition. This study is makes use of some results on the Mickelsson algebras and Shapovalov inverse \cite{AM2,M7} and based on  the study of the standard filtration of $\C^N\tp M_\la$ in what follows.

\subsection{Quantized universal enveloping algebra}
\label{ssecQUEA}
Throughout the paper, $\g$ is  a complex simple Lie algebra of type $B$, $C$ or $D$ (the $A$-case has been considered in \cite{AM}).
We assume that $q\in \C$ is not a root of unity. Denote by $U_q(\g_\pm)$ the $\C$-algebra generated by  $e_{\pm\al}$, $\al\in \Pi^+$, subject to the q-Serre relations, \cite{CP}.
Denote by $U_q(\h)$ the commutative $\C$-algebra generated by $q^{\pm h_\al}$, $\al\in \Pi^+$. The quantum group $U_q(\g)$ is a $\C$-algebra generated by  $U_q(\g_\pm)$ and $U_q(\h)$ subject
to the relations
$$
q^{ h_\al}e_{\pm \bt}q^{-h_\al}=q^{\pm(\al,\bt)} e_{\pm \bt},
\quad
[e_{\al},e_{-\bt}]=\delta_{\al, \bt}\frac{[h_{\al}]_q}{[\frac{(\al,\al)}{2}]_q},
$$
were  $[z]_q=\frac{q^z-q^{-z}}{q-q^{-1}}$. We work with the opposite comultiplication  as in  \cite{CP}:
\be
&\Delta(e_{\al})=e_{\al}\tp 1 + q^{h_{\al}}\tp e_{\al},
\quad
\Delta(e_{-\al})=e_{-\al}\tp q^{-h_{\al}} + 1 \tp e_{-\al},
\nn\\&
\Delta(q^{\pm h_{\al}})=q^{\pm h_{\al}}\tp q^{\pm h_{\al}},
\nn
\ee
for all $\al \in \Pi^+$.
The quantized Borel subalgebras $U_q(\b_\pm)\subset U_q(\g)$,  $\b_\pm=\g_\pm+\h$,  are generated by $U_q(\g_\pm)$ over $U_q(\h)$.
The universal R-matrix is fixed to be an element of an extended   tensor product of $U_q(\b_-)\tp U_q(\b_+)$.
Its transposed version due to the opposite comultiplication can be taken from \cite{CP}, Theorem 8.3.9.

We use the notation $e_i=e_{\al_i}$, and $f_{i}=e_{-\al_i}$ for  $\al_i\in \Pi^+$ in all cases apart
from $i=n$, $\g=\s\o(2n+1)$, where we set $f_n=[\frac{1}{2}]_q e_{-\al_n}$.
The corresponding commutation relation translates to
$
[e_{n},f_{n}]=[h_{\al_n}]_q.
$
With this normalization of generators, the  natural representation of $U_q(\g)$ on the vector space $\C^N$
is independent of $q$, see the next section.

\section{Natural representation of $U_q(\g)$}
By $\Gamma$ we denote the root lattice $\Gamma=\Z\Pi^+$ with  $\Gamma^+=\Z_+\Pi^+$.
Let $I$ designate the set of integers $\{1,\ldots,N\}$. For $\bt\in \Gamma^+$ we define $P(\bt)$ to be the set of all pairs $i,j\in I$
such that $\ve_i-\ve_j=\bt$.
Let $e_{ij}\in  \End(\C^N)$, $i,j\in I$, denote the standard matrix units. The following assignment defines a representation of $\g$, which
is equivalent to the natural representation:
$\pi(e_\al)=\sum_{(l,r)\in P(\al)}e_{lr}$,
$\pi(f_\al)=\sum_{(l,r)\in P(\al)}e_{rl}$, $\pi(h_{\ve_i})=e_{ii}-e_{i'i'}$.
The action of the Chevalley generators can be conveniently visualized by the diagrams
\begin{center}
\begin{picture}(390,60)
\put(160,45){$\g=\s\o(2n+1)$}
\put(0,0){$w_{1'}$}
\put(10,15){\circle{3}}
\put(45,15){\vector(-1,0){30}}
\put(50,15){\circle{3}}
\put(100,15){$\ldots$}
\put(85,15){\vector(-1,0){30}}
\put(45,0){$w_{2'}$}
\put(160,15){\circle{3}}
\put(155,15){\vector(-1,0){30}}
\put(150,0){$w_{n'}$}
\put(195,15){\vector(-1,0){30}}
\put(200,15){\circle{3}}
\put(190,0){$w_{n+1}$}
\put(235,15){\vector(-1,0){30}}
\put(240,15){\circle{3}}
\put(230,0){$w_{n}$}
\put(275,15){\vector(-1,0){30}}
\put(345,15){\vector(-1,0){30}}
\put(350,15){\circle{3}}
\put(345,0){$w_{2}$}
\put(290,15){$\ldots$}
\put(385,15){\vector(-1,0){30}}
\put(390,15){\circle{3}}
\put(385,0){$w_{1}$}
\put(25,20){$f_{\al_1}$}
\put(65,20){$f_{\al_2}$}
\put(135,20){$f_{\al_{n-1}}$}
\put(175,20){$f_{\al_{n}}$}
\put(215,20){$f_{\al_{n}}$}
\put(255,20){$f_{\al_{n-1}}$}
\put(325,20){$f_{\al_{2}}$}
\put(365,20){$f_{\al_{1}}$}

\end{picture}
\end{center}
\begin{center}
\begin{picture}(350,60)
\put(155,45){$\g=\s\p(2n)$}
\put(0,0){$w_{1'}$}
\put(10,15){\circle{3}}
\put(45,15){\vector(-1,0){30}}
\put(50,15){\circle{3}}
\put(100,15){$\ldots$}
\put(85,15){\vector(-1,0){30}}
\put(45,0){$w_{2'}$}
\put(160,15){\circle{3}}
\put(155,15){\vector(-1,0){30}}
\put(150,0){$w_{n'}$}
\put(195,15){\vector(-1,0){30}}
\put(200,15){\circle{3}}
\put(190,0){$w_{n}$}
\put(235,15){\vector(-1,0){30}}
%\put(240,15){\circle{3}}
%\put(230,0){$w_{n-1}$}
%\put(275,15){\vector(-1,0){30}}
\put(305,15){\vector(-1,0){30}}
\put(310,15){\circle{3}}
\put(305,0){$w_{2}$}
\put(250,15){$\ldots$}
\put(345,15){\vector(-1,0){30}}
\put(350,15){\circle{3}}
\put(345,0){$w_{1}$}
\put(25,20){$f_{\al_1}$}
\put(65,20){$f_{\al_2}$}
\put(135,20){$f_{\al_{n-1}}$}
\put(175,20){$f_{\al_{n}}$}
\put(215,20){$f_{\al_{n-1}}$}
%\put(255,20){$f_{\al_{n-2}}$}
\put(285,20){$f_{\al_{2}}$}
\put(325,20){$f_{\al_{1}}$}

\end{picture}
\end{center}
\begin{center}
\begin{picture}(440,70)
\put(195,55){$\g=\s\o(2n)$}
%\put(10.5,30){\line(1,0){37}}
\put(0,0){$w_{1'}$}
\put(10,15){\circle{3}}
\put(50,15){\circle{3}}
\put(45,15){\vector(-1,0){30}}
\put(85,15){\vector(-1,0){30}}
\put(100,15){$\ldots$}
\put(155,15){\vector(-1,0){30}}
\put(45,0){$w_{2'1}$}
\put(160,15){\circle{3}}
\put(150,0){$w_{n'+1}$}
\put(195,15){\vector(-1,0){30}}
\put(200,15){\circle{3}}
\put(190,0){$w_{n'}$}
\qbezier(163,22)(200,48)(237,22) \put(162,21.5){\vector(-2,-1){2}}
\qbezier(203,22)(240,48)(277,22) \put(202,21.5){\vector(-2,-1){2}}
\put(240,15){\circle{3}}
\put(235,0){$w_{n}$}
\put(275,15){\vector(-1,0){30}}
\put(280,15){\circle{3}}
\put(270,0){$w_{n-1}$}
\put(330,15){$\ldots$}
\put(315,15){\vector(-1,0){30}}
\put(385,15){\vector(-1,0){30}}
\put(390,15){\circle{3}}
\put(385,0){$w_{2}$}
\put(425,15){\vector(-1,0){30}}
\put(430,15){\circle{3}}
\put(425,0){$w_{1}$}
\put(25,20){$f_{\al_1}$}
\put(65,20){$f_{\al_2}$}
\put(135,20){$f_{\al_{n-2}}$}
\put(172,20){$f_{\al_{n-1}}$}
\put(195,38){$f_{\al_{n}}$}
\put(252,20){$f_{\al_{n-1}}$}
\put(290,20){$f_{\al_{n-2}}$}
\put(235,38){$f_{\al_{n}}$}
\put(365,20){$f_{\al_{2}}$}
\put(405,20){$f_{\al_{1}}$}
\end{picture}
\end{center}
Reverting the arrows one gets the diagrams for positive Chevalley generators of $\g$.

We introduce a partial ordering on the integer interval $I$ by setting $i\preccurlyeq j$ if and only if
there is a (monic) Chevalley monomial $\psi \in U_q(\g_-)$ such that  $w_j$ is equal to $\psi w_i$ up to an invertible scalar multiplier,
$w_j=\psi w_i$. This monomial, if exists, represents a path from $w_i$ to $w_j$ in the representation diagram, which
becomes the Hasse diagram of the poset.
Such $\psi$ is unique, which is obvious for the series $B$ and $C$ and still true for $D$. Indeed,
two different paths from $w_{n-1}$ to $w_{n+2}$ yield the products $f_{\al_n}f_{\al_{n-1}}$ and $f_{\al_{n-1}}f_{\al_n}$,
which are the same due to Serre relations. We denote this monomial by $\psi_{ji}$.
The relation $\prec$ is consistent with the natural ordering on $\Z$, and coincides with it unless
 $\g=\s\o(2n)$. In the latter case $n$ and $n'$ are incomparable.

%Suppose that $i\succeq j$ and fix a path from $i$ to $j$ on the Hasse diagram.
%We introduce summation  $\stackrel{\curvearrowright}{\sum}_{m=i}^{\lower6pt\hbox{$\scriptstyle{j}$}}$ as that over all nodes $m$ of %that path. We
%shall use this summation  only when it is path-independent.

In what follows, we also use the monomials  $\psi^{ij}$ obtained from  $\psi_{ji}$ by reverting the order of factors.
It is clear that $\psi^{ij}=\psi^{im}\psi^{mj}$ for any $m$ such that $i\preccurlyeq m\preccurlyeq j$.
\begin{definition}
We call $\psi^{ij}$  the principal  monomial of the pair $i\preccurlyeq j$.
\end{definition}
\noindent
Their significance will be clear later in the
section devoted to the standard filtration of tensor product modules.

Remark that all Chevalley monomials of weight $\ve_j-\ve_i$ are obtained from $\psi^{ij}$ by permutation of factors.

We will also need another partial ordering on $I$ that is relative to $\k$: write
$i\lessdot j$ if  $w_i$ and $w_j\in U_q(\k_-)\k_-w_i$. Clearly $i\lessdot j$ if and only if
$i\prec j$ and $w_i,w_j$ belong an irreducible $\k$-submodule in $\C^N$.
Let $I_\k\subset I$  be the set of all minimal elements with respect to this ordering and
$\bar I_\k$ be its complement in $I$. Elements of $I_\k$ label the highest weight vectors
of the irreducible $\k$-submodules in $\C^N$.

\subsection{Reduced Shapovalov inverse}
\label{Sec:RedShapInf}
In this section, we recall a construction of  Shapovalov inverse reduced to $\End(\C^N)\tp U_q(\b_-)$.
It is given in \cite{AM2} for the general linear and orthosymplectic quantum groups (see also \cite{M7} for the general case). Note with care that \cite{AM2,M7} deal with a different version of the quantum group.
To adapt those results to the current setting, one has to twist the coproduct by $q^{\sum_{i=1}^n h_i\tp h_i}$ and
replace $q$ with $q^{-1}$.

Given $\la \in \frac{1}{\hbar}\h^*\op\h^*$ consider a 1-dimensional $U_q(\b_\pm)$-module $\C_\la$ with the representation  defined by the assignment
$q^{\pm h_\al}\mapsto q^{(\la,\al)}$, $e_\al\mapsto 0$ for $\al\in \Pi^+$.
Denote by  $M_\la$ the Verma module $U_q(\g)\tp_{U_q(\b_+)}\C_\la$ with the canonical generator $v_\la$, \cite{Ja}.
Let $M_\la^*$ denote the opposite Verma module $U_q(\g)\tp_{U_q(\b_-)}\C_{-\la}$ of the lowest weight $-\la$.
There is an invariant pairing $M_\la\tp M_\la^*\to \C$, which is equivalent to the contravariant Shapovalov form on $M_\la$,
upon an identification $M_\la^*\sim M_\la$ through an anti-algebra isomorphism $U_q(\g_-)\simeq U_q(\g_+)$, \cite{DCK}. We also call it Shapovalov form.

Recall that a vector $v\not =0$ in a $U_q(\g)$-module $V$ is called singular if $e_\al v=0$ for all $\al \in \Pi^+$.
Singular vectors are defined up to a scalar multiplier.
Reduced Shapovalov inverse is a matrix $\hat F=\sum_{i=1}^j e_{ij}\tp \hat f_{ij} \in \End(\C^N)\tp \hat U_q(\b_-)$,
where the roof means extension over the field of fractions of $U_q(\h)$. This matrix yields
a singular vector $\hat F(w_j\tp v_\la)$ in $\C^N\tp M_\la$ for all $j\in I$.
For generic $\la$ the matrix $\hat F$ is a homomorphic image
of the  Shapovalov inverse  lifted to $\hat U_q(\g_+)\tp \hat U_q(\b_-)$.

The entries $\hat f_{ij}$ can be expressed through the Chevalley generators as follows.
First introduce $f_{ij}\in U_q(\g_-)$ for all $i<j$, which are closely related
to the R-matrix of $U_q(\g)$, \cite{AM2}. Put $f_{ij}=f_{j'i'}=f_i$ for $i-1=j<\frac{N+1}{2}$ and
\be
f_{ij}=[f_{j-1},\ldots [f_{i+1},f_i]_{q^{}}\ldots ]_{q},
\quad
f_{j'i'}=[\ldots [f_i,f_{i+1}]_{q^{}},\ldots f_{j-1}]_{q},
\label{gl(n)}
\ee
for  $i+1<j\leqslant\frac{N+1}{2}$ and all $\g$. Furthermore,
$$
f_{nn'}=(q^{-1}-1)f_n^2, \quad f_{i,n+1}=[f_{n},f_{in}]_{q}, \quad f_{n+1,i'}=[f_{n'i'},f_{n}]_{q}, \quad i<n
,\quad \g=\s\o(2n+1),
$$
$$
f_{nn'}=[2]_qf_n, \quad f_{in'}=[f_{n},f_{in}]_{q^{2}}, \quad f_{ni'}=[f_{n'i'},f_{n}]_{q^{2}}, \quad i<n,
\quad
\g=\s\p(2n),
$$
$$
f_{nn'}=0, \quad f_{in'}=[f_{n},f_{i,n-1}]_{q}, \quad f_{ni'}=[f_{n'+1,i'},f_n]_{q},\quad i<n-2
,\quad \g=\s\o(2n),
$$
and finally, for $ i,j<n$,
$$
f_{ij'}=q^{-\dt_{ij}}[f_{n+1,j'},f_{i,n+1}]_{q^{\dt_{ij}}},  \> N=2n+1
,\qquad
f_{ij'}=q^{-\dt_{ij}}[f_{nj'},f_{in}]_{q^{1+\dt_{ij}}},  \> N=2n.
$$
There exists an analog of Poincare-Birghoff-Witt (PBW) basis in $U_q(\g_-)$ generated by certain
elements labeled by $\Rm^+$, which can be presented as deformed commutators of the Chevalley generators, \cite{CP}.
The presence of PBW bases allows to identify $U_q(\g_-)$ with $U(\g_-)$ as vector spaces (and $U_q(\h)$-modules).
This identification makes $U_q(\g_-)$ a deformation of $U(\g_-)$. It follows that $f_{ij}$ are deformations of root vectors from  $\g_-$.

Put $\rho_i=(\rho,\ve_i)$ for $i\in I$ and introduce $\eta_{ij}=h_i-h_j+\rho_i-\rho_j-\frac{||\ve_i-\ve_j||^2}{2}\in \h+\C$,
$
A^j_i=-\frac{q-q^{-1}}{q^{2\eta_{ij}}-1},
$
for all $i,j\in I$ such that $i\prec j$.
We call a sequence $\vec m=(m_1,\ldots,m_k)$ a route from $m_1$ to $m_k$ if $m_1\prec \ldots \prec m_k$.
To every route $\vec m$ we assign
the products
$$
f_{\vec m}=f_{m_1,m_2}\ldots f_{m_{k-1},m_k}, \quad A^j_{\vec m}=A_{m_1}^j\ldots A_{m_k}^j,
$$
where $m_k\prec j$.
Given another route, $\vec l=(l_1,\ldots,l_s)$ with $\vec m\prec \vec l$ meaning $m_k\prec l_1$,
there is a route $(\vec m, \vec l)=(m_1,\ldots,m_k, l_1,\ldots,l_s)$.
 Define $\tilde \rho_i=\rho_i+\frac{||\ve_i||^2}{2}$ for all $i\in I$.
Then $\hat f_{ij}=0$ if $i> j$,
$\hat f_{ii}=1$ and
$
\hat f_{ij}=\sum_{i \preccurlyeq  \vec m \prec j} f_{\vec m,j}A_{\vec m}^j
q^{\eta_{ij}-\tilde \rho_i+\tilde \rho_j}
$ for $i<j$,
where the summation is done over all routes $(\vec m, j)$ from
$i$ to $j$. Note that the factor $q^{\eta_{ij}-\tilde \rho_i+\tilde \rho_j}$ comes from a different version of
the quantum group adopted in \cite{AM,M7}.
\begin{lemma}
\label{dynamical_roots_deformed}
Suppose that $\al \in \Pi^+_\k\subset \Rm_\g^+$ and $(i,j)\in P(\al)$.
For all $\la\in \Cg^*_{\k,reg}$, the specialization $\hat{f}_{ij}[\eta_{ij}]_q$ at weight $\la$ is a deformation of
a classical root vector, $-f_{\al}\in \g_-$.
\end{lemma}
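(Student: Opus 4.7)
The plan is to specialise the route expansion of $\hat{f}_{ij}$ at weight $\la$, multiply by $[\eta_{ij}]_q$, and show that in the classical limit $\hbar\to 0$ only the singleton route $\vec m = (i)$ survives, producing $-f_\al$. First, I verify that $\eta_{ij}(\la) = 0$. Writing $\la = \hbar^{-1}\la_0 + \la_1 + \rho_\k - \rho$ with $\la_0 \in \c^*_{\k,\mathrm{reg}}$ and $\la_1 \in \c^*_\k$, both orthogonal to $\Rm_\k$, the definition of $\Cg^*_{\k,\mathrm{reg}}$ gives $(\la+\rho,\al) = (\rho_\k,\al) = (\al,\al)/2$, so $\eta_{ij}(\la) = (\la+\rho,\al) - \|\ve_i-\ve_j\|^2/2 = 0$. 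Thus $[\eta_{ij}]_q$ vanishes identically at $\la$ while $A_i^j$ acquires a simple pole; a direct manipulation yields the regularising identity $A_i^j\,[\eta_{ij}]_q = -q^{-\eta_{ij}}$.

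With this identity in hand, the singleton route $\vec m = (i)$ produces, after multiplication by $[\eta_{ij}]_q$, the term $-f_{ij}\, q^{\tilde\rho_j - \tilde\rho_i}$. Since $f_{ij}$ is constructed as a nested Chevalley commutator of weight $\ve_j - \ve_i = -\al$, it is a deformation of the classical root vector $f_\al \in \g_-$, so the classical limit of the singleton contribution is exactly $-f_\al$, which is the asserted answer.

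The combinatorial core of the argument is the claim that for every $m_s$ occurring in a route with $m_s \succ i$, one has $\ve_{m_s}-\ve_j \notin \Rm_\k$. Suppose, for contradiction, that $\ve_{m_s}-\ve_j = \gm \in \Rm^+_\k$. By the Hasse ordering $m_s \succ i$, the difference $\ve_i - \ve_{m_s} = \al - \gm$ is a positive $\g$-root. Closure of the $\k$-root subsystem in $\Rm_\g$ -- which holds for every generalised Levi $\k$ by standard facts on Levi and Borel--de Siebenthal pseudo-Levi subsystems -- then places $\al - \gm$ in $\Rm^+_\k$, and $\al = (\al - \gm) + \gm$ exhibits $\al$ as a sum of two positive $\k$-roots, contradicting simplicity of $\al$ in $\Pi^+_\k$.

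This closure argument yields two conclusions. First, no factor $A_{m_s}^j$ with $m_s \succ i$ can develop a pole at $\la$: a pole would force $\eta_{m_s,j}(\la) = 0$, i.e.\ $\ve_{m_s} - \ve_j \in \Pi^+_\k$, which is already excluded. Hence every route with $m_1 > i$ contributes $[\eta_{ij}]_q|_\la$ times a finite quantity, which is zero. Second, regularity of $\la_0$ gives $(\la_0, \ve_{m_s} - \ve_j) \neq 0$, so the denominator $q^{2\eta_{m_s,j}}-1$ tends to a nonzero constant at $\hbar = 0$ while the numerator $q - q^{-1}$ is $O(\hbar)$, forcing $A_{m_s}^j = O(\hbar)$. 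Consequently, longer routes $\vec m = (i, m_2, \dots, m_k)$ with $k \geq 2$ contribute $A_{m_2}^j \cdots A_{m_k}^j = O(\hbar^{k-1})$, which vanishes in the classical limit. The main obstacle will be the closure step combined with careful bookkeeping of $\hbar$-orders of the $A$-factors; once these are in place, only the singleton route survives and delivers $-f_\al$ as claimed.
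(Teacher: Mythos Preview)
Your approach mirrors the paper's: isolate the singleton route (yielding $-f_{ij}$ times a $q$-power that is $1$ at $q=1$) and then argue that every longer route carries an extra factor $A^j_{m_s}=O(\hbar)$ because its denominator $q^{2\eta_{m_s j}}-1$ has a nonzero classical limit. The paper records this last point as the bare observation that no intermediate $k$ satisfies $e^{2\lambda^0_k}=e^{2\lambda^0_j}$, whereas you attempt to justify it by a root-system closure argument. So the strategy is the same; the difference is that you try to supply the missing combinatorics.

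That combinatorial step has a genuine gap. You assert that $i\prec m_s$ forces $\ve_i-\ve_{m_s}$ to be a positive $\g$-root, and then invoke closure of $\Rm_\k$ in $\Rm_\g$. But weight differences in $\C^N$ are not always roots: in types $B$ and $D$ one has $\ve_k-\ve_{k'}=2\ve_k\notin\Rm_\g$, so your implication fails whenever $m_s=i'$. Dually, the inference ``$(\la_0,\ve_{m_s}-\ve_j)\neq0$ by regularity of $\la_0$'' presupposes that $\ve_{m_s}-\ve_j$ is itself a root, which fails when $m_s=j'$. These exceptions are not vacuous. Take $\g=\s\o(8)$, $\Rm_\k=\{\pm(\ve_1+\ve_2),\pm(\ve_1-\ve_2)\}$, $\alpha=\ve_1+\ve_2\in\Pi^+_\k$, and $(i,j)=(2,8)$. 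The intermediate $m_s=7=2'$ gives $\gamma=\ve_7-\ve_8=\ve_1-\ve_2\in\Rm_\k$, while $\alpha-\gamma=2\ve_2\notin\Rm_\g$, so closure says nothing; and in fact $\eta_{78}|_\lambda=0$ for every $\lambda\in\Cg^*_{\k,reg}$, so $A^8_7$ is singular at $\lambda$, not $O(\hbar)$. The paper's terse observation~(b) is equally silent on this configuration, so the defect is in the level of detail rather than in your plan: a complete argument must either restrict which pair $(i,j)\in P(\alpha)$ is used, or account for cancellations among routes that pass through other weights of the same irreducible $\k$-constituent of $\C^N$.
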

\begin{proof}
Present $\la$ as $\al=\frac{1}{\hbar} \la^0+\la^1\in \Cg^*_{\k,reg}$, $\la^i\in \h^*$.
Observe that a) $e^{2\la^0_i}=e^{2\la^0_j}$ for all $\al=\ve_i-\ve_j\in \Pi_\k^+$ once
$\la^0\in \c^*_{\k}$ and b) there is no $k$ such that
$i\prec k \prec j$ and $e^{2\la^0_i}=e^{2\la^0_k}=e^{2\la^0_j}$ if $\la^0\in \c^*_{\k,reg}$.
Furthermore, write
$\hat{f}_{ij}[\eta_{ij}]_q=-f_{ij}- \sum_{i\prec \vec m\vec \prec j } f_{i,\vec m,j}A^{j}_{\vec m,j}q^{\tilde \rho_j-\tilde \rho_i}$, where the sum is taken over
non-empty routes $\vec m$.
For all $k$ subject to  $i\prec  k\prec j$, the denominator in $A^j_k|_{\la}=-\frac{q-q^{-1}}{q^{2\eta_{kj}|_\la}-1}$ tends to $e^{2\la^0_k-2\la^0_j}-1\not =0$ as $q\to 1$. Therefore, the sum vanishes modulo $\hbar$,
and $f_{ij}$ tends to a classical root vector.
\label{non-zero}
\end{proof}
Define elements $\check{f}_{ij}=\hat f_{ij}\prod_{i\preccurlyeq k \prec j}[\eta_{kj}]_q\in U_q(\b_-)$ for all $i\prec j$.
They satisfy the identity
\be
e_\al \check{f}_{ij}=-\sum_{(l,r)\in P(\al)}\dt_{l,i}q^{-(\al,\ve_l)}\check{f}_{r,j}[\eta_{ij}]_q \mod U_q(\g)\g_-, \quad \forall\al \in \Pi^+,
\label{e_on_f}
\ee
Fix $(i,j)\in P(\al)$ for  $\al \in \Rm^+$ and suppose that $\la=\frac{1}{\hbar}\la^0+\la^1$ with  $\la^i\in \h^*$ satisfies the condition $[\eta_{ij}|_\la]_q=0= [\eta_{j'i'}|_\la]_q$.
Then there is a singular vector $v_{\la-\al}$ of weight $\la-\al$ in the Verma module $M_\la$.
One can take $v_{\la-\al}=\check{f}_{ij}v_\la$ provided it is not zero,  since
$e_\al \check{f}_{ij}v_\la=0$ for all $\al \in \Pi^+$ by (\ref{e_on_f}).
If $\check{f}_{ij}v_\la=0$  at some $\la$,  one still can obtain $v_{\la-\al}$ from $\check{f}_{ij}v_\la$
(which is polynomial in $e^{\pm 2(\la^0,\al)}$, $\al \in \Pi^+$, for fixed $\la^1$ and $q$)
via renormalization,  since
singular vectors are defined up to a scalar multiplier.
In particular, if  $\al \in \k$ for some generalized Levi subalgebra $\k$ and $\la \in \Cg^*_{\k,reg}$,
then $v_{\la-\al}\simeq f_\al v_{\la}\mod \hbar $, by Lemma \ref{dynamical_roots_deformed}.
Note that $\check{f}_{ij}v_\la\simeq \check{f}_{j'i'}v_\la$ if $i\not = j'$,
as follows from the theory of Mickelsson algebras for quantum groups, \cite{KO}.
\section{Standard filtration on $\C^N\tp M_\la$}
In what follows, we work out a tool for our analysis of $\C^N\tp M_\la$, where
$M_\la$ is a generalized parabolic Verma module of weight $\la$. In this section, we do it for the ordinary Verma module
 $M_\la = U_q(\g)\tp_{U_q(\b_+)}\C_\la$ with $\la \in \frac{1}{\hbar}\h^*\op\h^*$.
An essential part of our technique  is a diagram language,  whose elements already appeared in \cite{M1,M4} and which is
given a systematic treatment here. The case of $\g\l(N)$ was already studied in \cite{AM}, so
we do it for orthogonal and symplectic $\g$.
We consider the standard filtration $V_\bullet=(V_i)_{i=1}^N$, $\{0\}=V_0\subset V_1\subset \ldots \subset V_N= \C^N\tp M_\la$,
where $V_i$ is generated by $\{e_j\tp v_\la\}$, $j\leqslant i$. Its graded module $\gr V_\bullet$ is a direct
sum of $V_j/V_{j-1}$, which are isomorphic to the Verma modules $M_{\la+\ve_j}$ (the  proof of \cite{BGG}, Lemma 5, readily adapts to quantum groups).

Given $\bt\in \Z_+\Pi^+$ we define  $\Psi_{\bt}\subset U_q(\g_-)$ to be the subset of Chevalley monomials of weight
$\bt$. We assume that a pair $(i,j)\in P(\bt)$ is chosen for this section.
Having fixed an order of elementary factors in $\psi$, we regard it a as path from $v_\la$ to $\psi v_\la$.
 We associate with $\psi v_\la$ a graph $H_{\psi}$ with nodes $\{v^k\}\in M_\la$,
$v^j=v_\la$, $v^i=\psi v_\la$, and arrows being negative Chevalley generators acting on $M_\la$.
For $\psi=\psi^{ij}$, this path is unique in almost all cases (except for type $D$, where we eliminate the ambiguity by fixing the order as $f_{\al_{n-1}}f_{\al_n}$). For principal $\psi$, we are concerned not just with  the terminating node $\psi v_\la$,
but also in all intermediate nodes. On the contrary, for non-principal $\psi$, only  $\psi v_\la$ is important for us, while the
specific path is immaterial.

We say that $f_\al$ has length $2$ if $\al=\al_n$ and $\g=\s\o(2n)$. All other generators are assigned with length $1$.
If all factors in $\psi$ have length $1$, we write $\psi= \phi_i\ldots \phi_{j-1}$ with $\phi_k\in \{f_\al\}_{\al\in \Pi^+}$, and we put
set $v^k=\phi_k v^{k+1}$. Then the diagram $H_{\psi}$ is set to be
\begin{center}
\begin{picture}(200,30)
\put(7,0){$v^{i}$}
\put(10,15){\circle{3}}
\put(45,15){\vector(-1,0){30}}
\put(50,15){\circle{3}}
\put(100,15){$\ldots$}
\put(85,15){\vector(-1,0){30}}
\put(45,0){$v^{i+1}$}
\put(160,15){\circle{3}}
\put(155,15){\vector(-1,0){30}}
\put(152,0){$v^{j-1}$}
\put(195,15){\vector(-1,0){30}}
\put(200,15){\circle{3}}
\put(197,0){$v^{j}$}
\put(25,20){$\phi_{i}$}
\put(65,20){$\phi_{\al_{i+1}}$}
\put(135,20){$\phi_{j-2}$}
\put(175,20){$\phi_{j-1}$}

\end{picture}
\end{center}
Now suppose that $\psi$ has (exactly one)  factor of length $2$. Write  $\psi=\phi_i\ldots \phi_{k}\phi_{k+2}\ldots \phi_{j-1}$,
where $\phi_{k}=f_{\al_n}$ (there are $j-i-1$ factors). Then the graph $H_{\psi}$  is
\begin{center}
\begin{picture}(290,40)
%\put(10.5,30){\line(1,0){37}}
\put(7,0){$v^{i}$}
\put(10,15){\circle{3}}
\put(45,15){\vector(-1,0){30}}
\put(55,15){$\ldots$}
\put(105,15){\vector(-1,0){30}}
\put(110,15){\circle{3}}
\put(106,0){$v^{k}$}
\qbezier(113,22)(150,48)(187,22) \put(112,21.5){\vector(-2,-1){2}}
\put(150,13){$\scriptstyle{\times}$}
\put(162,15){\vector(-1,0){6}}\put(170,15){\line(-1,0){5}}\put(178,15){\line(-1,0){5}}\put(186,15){\line(-1,0){5}}
\put(145,0){$v^{k+1}$}
\put(190,15){\circle{3}}
\put(184,0){$v^{k+2}$}
\put(240,15){$\ldots$}
\put(225,15){\vector(-1,0){30}}
\put(295,15){\vector(-1,0){30}}
\put(300,15){\circle{3}}
\put(295,0){$v^{j}=v_\la$}
\put(25,21){$\phi_{i}$}
\put(82,20){$\phi_{k-1}$}
%\put(162,20){$\phi_{k-1}$}
\put(200,20){$\phi_{k+2}$}
\put(120,38){$\phi_{k}=f_{\al_{n}}$}
\put(275,20){$\phi_{j-1}$}
\end{picture}
\end{center}
Here we distinguish two cases. If  $\psi=\psi^{ij}$, then $\phi_{k-1}=f_{\al_{n-1}}$, and the dashed arrow $f_{\al_{n-1}}$ is included
in $H_\psi$.
The node $v^{k+1}$ is set to $f_{\al_{n-1}}v^{k+2}$. For non-principal $\psi$, the node $v^{k+1}$ is arbitrary (immaterial) and there is no arrow
from $v^{k+2}$ to $v^{k+1}$.

We also consider a graph $V_{ij}$, which is a  part of
the natural representation diagram of $U_q(\g_-)$ that includes all paths from $w_i$ to $w_j$. We transpose it to make a vertical graph oriented from
top $w_i$ to bottom $w_j$.

We denote by $\Arr(v^k)$ the set of arrows originated at $v^k$ and similarly $\Arr(w_m)$ the set of arrows from $w_k$.
By construction, an arrow from node $m$ to node $k$ has length $k-m$.

Finally, we define tensor product $D_\psi=H_\psi\tp V_{ij}$ as a  graph on a two-dimensional lattice
whose nodes are $w_k^m=w_k\tp v^m \in \C^N\tp  M_\la$ and arrows are $\Arr(w_k^m)= \Arr (w_k)\tp \id\bigcup \id\tp \Arr (v^m)$,
The diagram is oriented so that $H_\psi$-arrows and $V_{ij}$-arrows  are directed, respectively, leftward and downward; the origin $w^j_i$ is in the right upper corner.
 We need only
the triangular part of the diagram including the nodes $v^k_m$ with $k+m\geqslant i+j$. The set
$\{w_k^k\}_{k=i}^j$ is called principal diagonal. With $\psi=\psi^{ij}$, the node $w^k_k$ on
the principal diagonal is $w_k\tp \psi^{kj}v_\la$, $k=i,\ldots, j$.
Here is an example of diagram $D_\psi$ with all arrows of length $1$:
\begin{figure}[H]
\caption{}
\label{D_psi}
\begin{center}
\begin{picture}(290,180)
\put(150,20){}
\put(5,160){$\scriptstyle{w^{i}_{i}}$}
\put(60,160){$\scriptstyle{w^{i+1}_{i}}$}
\put(115,160){$\scriptstyle{w^{i+2}_{i}}$}
\put(225,160){$\scriptstyle{w^{j-1}_i}$}
\put(275,160){$\scriptstyle{w^j_i}$}

\put(55,160 ){\vector(-1,0){30}}\put(108,160 ){\vector(-1,0){30}}\put(163,160 ){\vector(-1,0){30}}
\put(170,160 ){$\ldots$}
\put(218,160 ){\vector(-1,0){30}}
\put(270,161 ){\vector(-1,0){30}}

\put(40,167){$\scriptstyle{\phi_{i} }$}\put(91,167){$\scriptstyle{\phi_{i+1}}$}
\put(148, 167){$\scriptstyle{\phi_{i+2}}$}\put(201,167){$\scriptstyle{\phi_{j-2}}$}\put(250,167){$\scriptstyle{\phi_{j-1}}$}

\put(63,157 ){\vector(0,-1){18}}\put(119,157){\vector(0,-1){18}}\put(228,157 ){\vector(0,-1){18}}
\put(278,157 ){\vector(0,-1){18}}

\put(60,130){$\scriptstyle{w^{i+1}_{i+1}}$}
\put(115,130){$\scriptstyle{w^{i+2}_{i+1}}$}
\put(225,130){$\scriptstyle{w^{j-1}_{i+1}}$}
\put(275,130){$\scriptstyle{w^j_{i+1}}$}

\put(108,131 ){\vector(-1,0){30}}\put(163,131 ){\vector(-1,0){30}}
\put(170,131 ){$\ldots$}
\put(218,131 ){\vector(-1,0){30}}
\put(270,131 ){\vector(-1,0){30}}

\put(119,127){\vector(0,-1){18}}\put(228,127 ){\vector(0,-1){18}}
\put(278,127 ){\vector(0,-1){18}}

\put(115,100){$\scriptstyle{w^{i+2}_{i+2}}$}
\put(225,100){$\scriptstyle{w^{j-1}_{i+2}}$}
\put(275,100){$\scriptstyle{w^j_{i+2}}$}

\put(163,101 ){\vector(-1,0){30}}
\put(170,101 ){$\ldots$}
\put(218,101 ){\vector(-1,0){30}}
\put(270,101 ){\vector(-1,0){30}}

\put(228,97 ){\vector(0,-1){18}}
\put(278,97 ){\vector(0,-1){18}}
\put(227,65 ){$\vdots$}
\put(277,65 ){$\vdots$}

\put(187,67){\circle{1}}\put(177,72){\circle{1}}\put(167,77){\circle{1}}

\put(228,60 ){\vector(0,-1){18}}\put(278,60 ){\vector(0,-1){18}}
\put(278,32 ){\vector(0,-1){18}}

\put(225,35){$\scriptstyle{w^{j-1}_{j-1}}$}\put(274,35){$\scriptstyle{w_{j-1}^j}$}
\put(270,38 ){\vector(-1,0){30}}

\put(274,5){$\scriptstyle{w_{j}^j}$}

%\put(290,22){$\scriptstyle{f_{j-1}}$}\put(290,47){$\scriptstyle{f_{j-2}}$}
%\put(290,65 ){$\vdots$}
%\put(290,90){$\scriptstyle{f_{i+2}}$}
%\put(290,118){$\scriptstyle{f_{i+1}}$}\put(290,148){$\scriptstyle{f_i}$}

\end{picture}
\end{center}
\end{figure}

\noindent
The arrows represent the action
of the Chevalley generators on the tensor factors $\C^N$ (vertical) and $ M_\la$ (horizontal).
The following property of this action readily follows from the coproduct of the Chevalley generators:
suppose that  $\phi\in \Arr(v^m)$  and  $\phi\not \in \Arr(w_k)$. If $v^{r}=\phi v^m$, then
$\phi(w^m_k)=v^r_k$, i.e., the horizontal arrow yields the action of $\phi$ on the entire tensor product.
In general, $\phi(w^m_k)=v^r_k\mod \C v^m_s$, where $w_s=\phi w_k$.

Suppose that nodes of a column segment $BC$ (with $C$ the bottom node) belong to a $U_q(\g)$-submodule $M\subset \C^N\tp M_\la$. Let $\phi$ be a Chevalley generator assigned to
a horizontal arrow with the origin at this column. Consider the following situations:
\begin{enumerate}
\item The length of $\phi$ is $1$.
\begin{enumerate}
\item  There is no vertical $\phi$-arrow with the origin at $C$.
\item There is a vertical $\phi$-arrow with the origin at $C$.
\end{enumerate}
\item The length of $\phi$ is $2$, and the size of $BC$ is $2$ or greater. Let $C'$ and $C''$ be the nodes
$1$ and $2$ steps up, respectively.
\begin{enumerate}
\item  There is no vertical $\phi$-arrow with the origin at $C$ and at $C'$.
\item There is a vertical $\phi$-arrow with the origin either at $C$ or at $C'$.
\end{enumerate}
\end{enumerate}
\begin{center}
\begin{picture}(40,95)
\put(15,30){\line(0,1){60}}\put(30,30){\line(0,1){60}}
%\thicklines\put(30,30){\vector(0,-1){15}}
\thicklines\put(30,30){\vector(-1,0){15}}

\thicklines\put(30,90){\vector(-1,0){15}}

\put(10,90){$\scriptscriptstyle{A}$}\put(30,90){$\scriptscriptstyle{B}$}
\put(30,30){$\scriptscriptstyle{C}$}\put(10,30){$\scriptscriptstyle{D}$}
\put(13 ,0){1.a)}
\end{picture}
\hspace{20pt}
\begin{picture}(40,95)
\put(15,45){\line(0,1){45}}\put(30,30){\line(0,1){60}}\put(15,45){\line(1,0){15}}

\thicklines\put(30,45){\vector(0,-1){15}}\thicklines\put(30,45){\vector(-1,0){15}}
\thicklines\put(30,90){\vector(-1,0){15}}

%\thicklines\qbezier(30,45)(45,30)(30,15)\thicklines\put(33,18){\vector(-1,-1){3}}

\put(10,90){$\scriptscriptstyle{A}$}\put(30,90){$\scriptscriptstyle{B}$}
\put(30,30){$\scriptscriptstyle{C}$}
\put(30,45){$\scriptscriptstyle{C'}$}\put(10,45){$\scriptscriptstyle{D}$}
\thinlines\put(27.5,32.5){\vector(-1,1){10}}
\put(13 ,0){1.b)}
\end{picture}
\hspace{20pt}
\begin{picture}(50,95)
\put(15,30){\line(0,1){60}}\put(45,30){\line(0,1){60}}

\thicklines\qbezier(15,90)(30,105)(45,90)\thicklines\put(18,93){\vector(-1,-1){3}}

\thicklines\qbezier(15,30)(30,45)(45,30)\thicklines\put(18,33){\vector(-1,-1){3}}
\thicklines\put(45,30){\vector(0,-1){15}}
\put(45,45){\circle{1.5}}

\put(10,90){$\scriptscriptstyle{A}$}\put(45,90){$\scriptscriptstyle{B}$}
\put(45,30){$\scriptscriptstyle{C}$}\put(45,45){$\scriptscriptstyle{C'}$}
\put(10,30){$\scriptscriptstyle{D}$}
\put(21 ,0){2.a)}
\end{picture}
\hspace{20pt}
\begin{picture}(50,95)
\put(15,60){\line(0,1){30}}\put(45,30){\line(0,1){60}}
\thinlines\multiput(45,30)(-6,6){6}{\line(-1,1){3}}
%\put(45,30){\line(-1,1){10}}\put(33,43){\line(-1,1){10}}\put(22,54){\vector(-1,1){6}}
%\put(42,33){\vector(-1,1){25}}
\thinlines\qbezier(42,33)(39,54)(18,58)\thicklines\put(19,58){\vector(-4,1){3}}

\thicklines\qbezier(15,90)(30,105)(45,90)\thicklines\put(18,93){\vector(-1,-1){3}}
\thicklines\qbezier(15,60)(30,75)(45,60)\thicklines\put(18,63){\vector(-1,-1){3}}

\thicklines\qbezier(45,45)(60,30)(45,15)\thicklines\put(48,18){\vector(-1,-1){3}}
\thicklines\qbezier(45,30)(60,15)(45,0)\thicklines\put(48,3){\vector(-1,-1){3}}

\put(45,30){\circle{1.5}}\put(45,45){\circle{1.5}}\put(45,60){\circle{1.5}}

\put(10,90){$\scriptscriptstyle{A}$}\put(45,90){$\scriptscriptstyle{B}$}
\put(45,30){$\scriptscriptstyle{C}$}\put(45,45){$\scriptscriptstyle{C'}$}
\put(45,60){$\scriptscriptstyle{C''}$}\put(10,60){$\scriptscriptstyle{D}$}
\put(21 ,0){2.b)}
\end{picture}
\end{center}
\begin{definition}
We call the transition from column $BC$ to column $AD$ an elementary  move or simply move of the length equal to the length
of $\phi$-arrow.
The elementary moves 1.a) and 2.a) are called left. The elementary moves 1.b) and 2.b) are called diagonal.
\end{definition}
\begin{lemma}[Elementary moves]
Under the conditions above, the column segment $AD$ lies in $M$.
\end{lemma}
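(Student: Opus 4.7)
My plan is to use the coproduct formula for the Chevalley generator $\phi=f_\al$,
$\Delta(\phi)=\phi\otimes q^{-h_\al}+1\otimes\phi$, to transfer membership in $M$ from column $BC$ to column $AD$ one row at a time. Acting on a node $X=w_k\otimes v^m$ of $BC$ gives
\[
\phi X \;=\; c\,(\phi w_k)\otimes v^m \;+\; w_k\otimes(\phi v^m),
\]
where $c\neq 0$ is the eigenvalue of $q^{-h_\al}$ on the weight vector $v^m$. By the identification of horizontal arrows with the action of $\phi$ on the $M_\la$-factor (spelled out right after the definition of $D_\psi$), the second summand is a nonzero scalar multiple of the $AD$-node at row $k$, while the first summand either vanishes or is a nonzero scalar multiple of the node $w_s\otimes v^m$ of the column $BC$, with $w_s=\phi w_k$. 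Since $BC\subset M$ and $M$ is a $U_q(\g)$-submodule, $\phi X\in M$; once the vertical summand is also in $M$, the corresponding $AD$-entry is obtained as a linear combination of elements of $M$.

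The whole argument then reduces to controlling the row index $s$ of the vertical shift. A length-one $\phi$-arrow shifts the row by one position in the poset $(I,\preccurlyeq)$; a length-two arrow shifts by two. In case 1.a the hypothesis forces $\phi w_{k_C}=0$, hence no vertical shift from $C$; for any strictly higher row $r$ the image $\phi w_r$ is either zero or equals $w_{r+1}$, which is still in $BC$ and hence in $M$. Sweeping $\phi$ row by row through $BC$ therefore produces the full column $AD$. Case 2.a is analogous, the hypothesis now eliminating the vertical summand at both bottom rows $C,C'$ and the one-step shift being replaced by a two-step one.

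The diagonal cases 1.b and 2.b, in which $AD$ is shorter than $BC$ by one (respectively two) rows, are handled by restricting the row-by-row sweep to those rows of $BC$ sitting at or above the truncation, so that every invoked vertical shift again lands inside $BC$. The same extraction argument then produces the truncated column $AD$. The only delicate point, which I expect to be the main obstacle, is the length-two case 2.b: one must check that the vertical $\phi$-arrow postulated at the bottom of $BC$ indeed leaves the column in the prescribed way, so that it does not silently contaminate the rows of $BC$ we are claiming for $AD$. This is essentially the geometric content of the hypothesis, but it requires a brief verification using the specific action of $f_{\al_n}$ in type $D$ on the natural representation. Beyond this bookkeeping near the boundary, the mechanism—separating the two summands of $\Delta(\phi)X$ via the nonzero scalar $c$—is uniform in all four cases.
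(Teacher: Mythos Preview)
Your argument is correct and is exactly the computation the paper suppresses behind its one-word proof ``Clear.'' Your stated worry about case 2.b is unnecessary: once you restrict the sweep to rows $r$ at or above $C''$, the vertical contribution $\phi w_r$ (if nonzero) lands at row $r+2\leqslant k_C$, which is still in $BC\subset M$, so the extraction of the $AD$-entry goes through uniformly with nothing extra to verify.
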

\begin{proof}
Clear.
\end{proof}
We will use elementary moves to reach a node or collection of nodes in the diagram starting from the rightmost column, which
is assumed to be in a submodule $M$. That way we prove that the target nodes are in $M$.

Let $D'_\psi \subset D_\psi$ denote the subgraph whose nodes form the triangle lying above the principal diagonal, i.e.  $\{w_k^{m}\}_{k+m>i+j}$.
\begin{lemma}
\label{D'}
Suppose that $\psi=\psi^{ij}$ is a  principal monomial. Then  the linear span of $D'_\psi$ lies in $V_{j-1}$.
\end{lemma}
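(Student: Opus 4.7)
The plan is to carry out a downward induction on the column index $m$, propagating $V_{j-1}$-membership from the rightmost column of $D'_\psi$ leftward by iterated application of the elementary moves lemma. For each $m\in\{i+1,\ldots,j\}$, write $S_m=\{w_k\otimes v^m : i\leq k\leq m-1\}$ for the column of $D'_\psi$ at index $m$, so that $D'_\psi=\bigsqcup_{m=i+1}^{j}S_m$ and $S_m$ has exactly $m-i$ nodes with bottom element $C_m=w_{m-1}\otimes v^m$.

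The base case $m=j$ is immediate: each element $w_k\otimes v^j=w_k\otimes v_\la$ of $S_j$ has $k\leq j-1$, so it belongs to the generating set of $V_{j-1}$.

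For the inductive step, assume $S_m\subset V_{j-1}$ and consider the horizontal $\phi_{m-1}$-arrow from column $m$ to column $m-1$. At the bottom node $C_m$, the Chevalley generator $\phi_{m-1}$ has a nontrivial vertical action, namely $f_{\alpha_{m-1}}w_{m-1}=w_m$, which leaves $S_m$ (landing on the principal diagonal, outside $D'_\psi$). This places the inductive step squarely in case 1.b of the elementary moves lemma when $\phi_{m-1}$ has length $1$, and in case 2.b when $\phi_{m-1}=f_{\alpha_n}$ has length $2$ (i.e.\ $\g=\s\o(2n)$ and $m-1$ corresponds to the length-$2$ factor of $\psi^{ij}$). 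In each case the lemma delivers $S_{m-1}\subset V_{j-1}$, a column with exactly one fewer node, matching the shortening built into cases 1.b and 2.b. Iterating this from $m=j$ down to $m=i+1$ exhausts $D'_\psi$.

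The only delicate point is to verify the standing hypothesis of the invoked elementary-move case at intermediate rows of $S_m$: any additional vertical $\phi_{m-1}$-arrow inside $S_m$ (which can occur because $f_{\alpha_k}$ in the orthosymplectic natural representations has a second nonzero action coming from the involution $i\mapsto i'$) must have its target still inside $S_m$, so that it contributes a vector known by induction to lie in $V_{j-1}$. A direct positional check on the explicit natural-representation diagrams confirms this: whenever the second action is triggered on a row of $S_m$, both its source and target lie in $[i,m-1]$. The principal-monomial hypothesis also makes the factorisation $\psi^{ij}=\phi_i\cdots\phi_{j-1}$ (and hence the horizontal label $\phi_{m-1}$) unambiguous, while the length-$2$ case in type $D$ is absorbed by case 2.b. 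The main obstacle is therefore the careful bookkeeping of these vertical contributions, especially in the presence of the $i\mapsto i'$ symmetry and the length-$2$ generator in type $D$.
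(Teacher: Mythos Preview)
For types $B$ and $C$ your column-by-column induction via diagonal moves (case 1.b) is exactly the paper's argument. The gap is in type $D$.

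When $\g=\s\o(2n)$ and $i\leqslant n-1$, $j\geqslant n'+1$, the principal monomial factors as $\psi^{ij}=\phi_i\cdots\phi_k\phi_{k+2}\cdots\phi_{j-1}$ with $\phi_k=f_{\alpha_n}$ the unique length-$2$ factor; there is no $\phi_{k+1}$. The horizontal $f_{\alpha_n}$-arrow in $H_\psi$ runs from $v^{k+2}$ to $v^k$, skipping $v^{k+1}$, and case 2.b accordingly produces a column with \emph{two} fewer nodes (bottom at the level of $C''$), not one as you assert. Thus your uniform recipe ``apply $\phi_{m-1}$ to pass from $S_m$ to $S_{m-1}$'' breaks down: at $m=k+2$ the length-$2$ move lands in $S_k$, not $S_{k+1}$, while at $m=k+1$ there is no outgoing horizontal arrow in $H_\psi$ at all, so no elementary move is available from that column. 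The column $S_{k+1}$ is simply never reached by your scheme. (It can in fact be obtained from $S_{k+2}$ via the dashed length-$1$ arrow $f_{\alpha_{n-1}}\colon v^{k+2}\to v^{k+1}$, which is included in $H_\psi$ precisely for principal $\psi$, but you do not invoke it.) A minor related slip: $\phi_{m-1}$ coincides with $f_{\alpha_{m-1}}$ only for $m\leqslant n$; for $m>n'$ it is $f_{\alpha_{N+1-m}}$, though your conclusion $\phi_{m-1}w_{m-1}=w_m$ remains valid.

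The paper's type-$D$ argument is accordingly more elaborate: it combines \emph{left} moves (cases 1.a, 2.a) with diagonal moves of both lengths to cover the region around the length-$2$ arrow, and treats the boundary cases $i=n$ and $j=n'$ separately. Your scheme can be repaired by branching at column $k+2$ to reach both $S_{k+1}$ (via the dashed arrow, case 1.b) and $S_k$ (via $f_{\alpha_n}$, case 2.b), then resuming length-$1$ diagonal moves from $S_k$; but as written the type-$D$ inductive step is incorrect.
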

\begin{proof}
Suppose that all horizontal arrows in $D'_\psi$ have length $1$, as e.g.  for $\g=\s\o(2n+1)$, and $\g=\s\p(2n)$.
Consider the diagram $D_\psi$ on Fig.\ref{D_prin}.a, where
$D'_\psi$ is the triangle $ABC$. The column $BC$ belongs to $ V_{j-1}$ by construction. All arrows
have length $1$. Applying elementary diagonal moves we prove that $ABC$ is in $ V_{j-1}$.

Now suppose there is a horizontal arrows of length $2$.
Assuming $i\leqslant n-1$, $n'+1\leqslant j$, consider the diagram $D_\psi$  where the triangle $D'_\psi$  is denoted by $ABC$ (cf. Fig.\ref{D_prin}.b).
\begin{figure}[h]
\caption{}
\label{D_prin}
\begin{center}
\begin{picture}(150,160)
\put(150,0 ){\line(0,1){150}}\put(0,150){\line(1,0){150}}
\thinlines\put(0,150){\line(1,-1){150}}
\thinlines\put(15,150){\line(1,-1){135}}

\put(90,150){\line(0,-1){75}}\put(105,150){\line(0,-1){90}}

\thicklines\put(150,75){\vector(0,-1){15}}
\thicklines\put(105,150){\vector(-1,0){15}}
\thicklines\put(105,75){\vector(-1,0){15}}

\put(152,148){$\scriptscriptstyle{i}$}\put(152,0){$\scriptscriptstyle{j}$}
\put(148,152){$\scriptscriptstyle{j}$}\put(0,152){$\scriptscriptstyle{i}$}
\put(13,143){$\scriptscriptstyle{A}$}\put(83,143){$\scriptscriptstyle{D}$}\put(98,143){$\scriptscriptstyle{F}$}
\put(143,143){$\scriptscriptstyle{B}$}
\put(83,69){$\scriptscriptstyle{E}$}\put(106,69){$\scriptscriptstyle{G'}$}
\put(100,55){$\scriptscriptstyle{G}$}
\put(143,10){$\scriptscriptstyle{C}$}
\put(70,0){a)}
\end{picture}
\hspace{30pt}
\begin{picture}(150,160)
\put(150,0 ){\line(0,1){45}}\put(150,60 ){\line(0,1){90}}
\put(0,150){\line(1,0){90}}\put(105,150){\line(1,0){45}}
\thinlines\put(0,150){\line(1,-1){150}}
\thinlines\put(15,150){\line(1,-1){135}}
\put(150,30){\circle{1.5}}\put(150,45){\circle{1.5}}\put(150,60){\circle{1.5}}\put(150,75){\circle{1.5}}

\put(75,150){\line(0,-1){60}}\put(90,150){\line(0,-1){75}}\put(105,150){\line(0,-1){90}}\put(120,150){\line(0,-1){105}}

\put(150,30){\line(-1,1){30}}
%\put(100,150){\line(0,-1){90}}\put(110,150){\line(0,-1){100}}%\put(120,150){\line(0,-1){110}}

\thicklines\put(150,45){\vector(0,-1){15}}\thicklines\put(150,75){\vector(0,-1){15}}
\thicklines\put(120,150){\vector(-1,0){15}}
\thicklines\put(90,150){\vector(-1,0){15}}
\thicklines\put(120,60){\vector(-1,0){15}}
\thicklines\put(150,30){\vector(-1,0){15}}

\thicklines\qbezier(150,75)(165,60)(150,45)\thicklines\qbezier(150,60)(165,45)(150,30)
\thicklines\put(153,33){\vector(-1,-1){3}}\thicklines\put(153,48){\vector(-1,-1){3}}

%\thicklines\qbezier(75,150)(90,165)(105,150)\thicklines\put(78,153){\vector(-1,-1){3}}
\thicklines\qbezier(90,150)(105,165)(120,150)\thicklines\put(93,153){\vector(-1,-1){3}}

\thicklines\put(90,90){\vector(-1,0){15}}
\thicklines\qbezier(90,75)(105,90)(120,75)\thicklines\put(93,78){\vector(-1,-1){3}}
%\thicklines\qbezier(75,90)(90,105)(105,90)\thicklines\put(78,93){\vector(-1,-1){3}}

%\put(72,100){$f$}\put(79,87){$\scriptstyle{i}$}
\put(152,148){$\scriptscriptstyle{i}$}\put(152,0){$\scriptscriptstyle{j}$}
\put(148,152){$\scriptscriptstyle{j}$}\put(0,152){$\scriptscriptstyle{i}$}
\put(160,73){$\scriptscriptstyle{n-1}$}
\put(160,59){$\scriptscriptstyle{n}$}\put(160,44){$\scriptscriptstyle{n'}$}\put(160,29){$\scriptscriptstyle{n'+1}$}
\put(13,143){$\scriptscriptstyle{A}$}
\put(68,143){$\scriptscriptstyle{D}$}\put(83,143){$\scriptscriptstyle{F}$}\put(98,143){$\scriptscriptstyle{H}$}
\put(113,143){$\scriptscriptstyle{J}$}\put(143,143){$\scriptscriptstyle{B}$}
\put(68,85){$\scriptscriptstyle{E}$}\put(91,85){$\scriptscriptstyle{G'}$}
\put(83,70){$\scriptscriptstyle{G}$}\put(120,73){$\scriptscriptstyle{L''}$}
\put(100,55){$\scriptscriptstyle{I}$}\put(120,61){$\scriptscriptstyle{L'}$}
\put(113,40){$\scriptscriptstyle{L}$}
\put(143,25){$\scriptscriptstyle{M}$}
\put(143,10){$\scriptscriptstyle{C}$}
\put(70,0){b)}
\end{picture}
\end{center}
\end{figure}
The rightmost column $BC$  belongs to $ V_{j-1}$ by construction.
For each node in the trapezoid $JBCL$ there is a horizontal arrow of length $1$.
Those arrows are distinct from vertical arrows for all nodes in the line $L'M\subset JBCL$.
Apply the corresponding left moves  to the columns rested on $L'M$. This operation proves that
trapezoid $HBCI$ is in $ V_{j-1}$. Then apply the diagonal move of length $2$ to the column $JL$ and get $FG\subset  V_{j-1}$.
All arrows in the triangle $ADE$ have length $1$, therefore $ADE\subset  V_{j-1}$, via diagonal moves.

The case $i= n$, $n'+1\leqslant j$ dysplayed on Fig.\ref{D_prin1}.a is similar to already considered: all horizontal arrows within $D'_\psi$ are of length $1$.
The case $i\leqslant n-1$, $n'= j$ is displayed on Fig.\ref{D_prin1}.b:
\begin{figure}[H]
\caption{}
\label{D_prin1}
\begin{center}
\begin{picture}(90,90)
\put(35,80){\line(1,0){45}}
\thinlines\put(5,80){\line(1,-1){75}}
\thinlines\put(20,80){\line(1,-1){60}}
\put(80,80){\circle{1.5}}\put(80,65){\circle{1.5}}\put(80,50){\circle{1.5}}

\put(80,50){\line(0,-1){45}}

\thicklines\put(80,65){\vector(0,-1){15}}
\thicklines\put(35,80){\vector(-1,0){15}}

\thicklines\qbezier(80,80)(95,65)(80,50) \thicklines\put(83,53){\vector(-1,-1){3}}
\thicklines\qbezier(5,80)(20,95)(35,80)\thicklines\put(8,83){\vector(-1,-1){3}}

\put(83,78){$\scriptscriptstyle{i}$}\put(82,2){$\scriptscriptstyle{j}$}
\put(78,82){$\scriptscriptstyle{j}$}\put(3,82){$\scriptscriptstyle{i}$}
\put(18,73){$\scriptscriptstyle{A}$}
\put(73,73){$\scriptscriptstyle{B}$}
\put(72,15){$\scriptscriptstyle{C}$}
\put(40,0){a)}
\end{picture}
\hspace{30pt}
\begin{picture}(90,90)
\put(5,80){\line(1,0){45}}

\thinlines\put(5,80){\line(1,-1){75}}
\thinlines\put(20,80){\line(1,-1){60}}
\put(80,5){\circle{1.5}}\put(80,20){\circle{1.5}}\put(80,35){\circle{1.5}}

\put(50,80){\line(0,-1){30}}\put(65,80){\line(0,-1){45}}\put(80,80){\line(0,-1){45}}

\thicklines\put(80,35){\vector(0,-1){15}}
\thicklines\put(80,80){\vector(-1,0){15}}

\thicklines\put(80,35){\vector(-1,0){15}}

\thicklines\qbezier(80,35)(95,20)(80,5) \thicklines\put(83,8){\vector(-1,-1){3}}

\thicklines\qbezier(50,80)(65,95)(80,80)\thicklines\put(53,83){\vector(-1,-1){3}}
\thicklines\qbezier(50,50)(65,65)(80,50)\thicklines\put(53,53){\vector(-1,-1){3}}

\put(82,78){$\scriptscriptstyle{i}$}\put(82,2){$\scriptscriptstyle{j}$}
\put(78,82){$\scriptscriptstyle{j}$}\put(5,82){$\scriptscriptstyle{i}$}
\put(18,73){$\scriptscriptstyle{A}$}
\put(43,73){$\scriptscriptstyle{F}$}\put(58,73){$\scriptscriptstyle{D}$}\put(58,31){$\scriptscriptstyle{E}$}
\put(73,73){$\scriptscriptstyle{B}$}
\put(45,45){$\scriptscriptstyle{G}$}
\put(83,33){$\scriptscriptstyle{C'}$}\put(83,48){$\scriptscriptstyle{C''}$}

\put(72,15){$\scriptscriptstyle{C}$}
\put(40,0){b)}
\end{picture}
\end{center}
\end{figure}
Apply the diagonal
move of length $1$ to the column $BC'$ and get $DE\subset V_{j-1}$. Then apply the diagonal
move of length $2$ to $BC'$ and get $EG\subset V_{j-1}$. Thence the entire triangle $AFG$ is in $M$.
\end{proof}
\begin{propn}
\label{principal_diagram1}
Suppose $\psi \in \Psi_{\bt}$, $(i,j)\in P(\bt)$, and $\psi \not =\psi^{ij}$. Then $w_i\tp \psi v_\la \in  V_{j-1}$.
\end{propn}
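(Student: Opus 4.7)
Fix any admissible ordering of the factors $\psi=\gamma_1\gamma_2\cdots\gamma_r$ and use it to build the path $H_\psi$ with intermediate nodes $v^m\in M_\la$, and hence the diagram $D_\psi$. By the definition of the standard filtration, the rightmost column $\{w_k\tp v_\la:\,i\le k\le j-1\}$ lies in $V_{j-1}$. My plan is to propagate this segment leftward across $D_\psi$ by repeated application of the elementary moves lemma. Let $k_m$ denote the bottom row of the segment in column $m$, so $k_j=j-1$; the top row stays at $k=i$ throughout, each left move (1.a or 2.a) preserves $k_m$, and each diagonal move (1.b or 2.b) decreases $k_m$ by the length of the horizontal arrow used. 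The desired conclusion $w_i\tp \psi v_\la=w_i^{\,i}\in V_{j-1}$ reduces to the inequality $k_i\geq i$ at the end of the propagation.

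The essential combinatorial observation is that for $\psi\ne \psi^{ij}$ at least one move must be left. Indeed, if every move were diagonal, an inductive unwinding starting from $k_j=j-1$ (with the length-$2$ step in type $D$ contributing two units) forces $k_m=m-1$ at every column, and the matching condition at step $m\to m-1$ identifies the corresponding factor of $\psi$ with $\phi_{m-1}$, where $\phi_k$ denotes the natural-representation edge label at $w_k$. Carrying this through all columns spells out $\psi=\phi_i\phi_{i+1}\cdots\phi_{j-1}=\psi^{ij}$, contradicting the hypothesis. Hence at least one left move occurs, and a direct count shows that the total drop is strictly less than $j-i$ in both the length-$1$-only and length-$2$ cases, so $k_i\geq i$.

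The principal technical obstacle is the type-$D$ case: the length-$2$ factor $f_{\al_n}$ produces a 2.a/2.b step that skips the column indexed by the ``immaterial'' $v^{k+1}$ and involves a pair of vertical arrows rather than a single one, and the incomparability of $n$ and $n'$ in the poset complicates the matching condition used in the inductive identification ``all-diagonal implies principal''. A minor point worth recording is that during the propagation $k_m$ can temporarily exceed $m-1$, placing intermediate nodes outside the triangular part of $D_\psi$; this is harmless since the elementary moves lemma operates at the level of $\C^N\tp M_\la$ rather than the triangle, and only the inequality $k_i\geq i$ enters the conclusion.
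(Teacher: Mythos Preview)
Your strategy differs from the paper's in one essential respect: you never invoke Lemma~\ref{D'}. The paper factors $\psi=\psi'\psi^{mj}$ with $m$ minimal, observes that the columns $m,\ldots,j$ of $D_\psi$ coincide with those of $D_{\psi^{ij}}$, and then applies Lemma~\ref{D'} to conclude that the column segment $CF$ (column $m$, rows $i$ through $m-1$) already lies in $V_{j-1}$. From there a single left move (guaranteed because $\phi'\neq\phi$) and diagonal moves in the remaining triangle $ABE$ finish the argument. You instead rederive the column $CF$ on the fly by propagating from column $j$ and noting that every move is diagonal until $\psi$ first deviates from $\psi^{ij}$. In types $B$ and $C$ this is equivalent and arguably cleaner, since each $w_k$ has a unique outgoing arrow and your ``matching condition'' is immediate.

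The gap is in type $D$, and it is exactly the one you flag without closing. At the step where $k_m=m-1=n'=n+1$, \emph{two} generators act nontrivially on the bottom node---$f_{\al_{n-1}}$ (length $1$) and $f_{\al_n}$ (length $2$)---so a diagonal move there does not by itself identify the factor with the principal edge label. Your inductive identification ``all diagonal $\Rightarrow$ $\psi=\psi^{ij}$'' therefore stalls at the diamond. The claim is nonetheless true, but it requires a short case split you have not supplied: if the factor at column $n'+1$ is $f_{\al_{n-1}}$, the next bottom is row $n$, where only $f_{\al_n}$ acts, forcing the length-$2$ factor next; if instead it is $f_{\al_n}$, one lands at column $n$ with bottom $n-1$, and the only length-$1$ arrow there is $f_{\al_{n-1}}$. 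Either way the pair $f_{\al_{n-1}}f_{\al_n}=f_{\al_n}f_{\al_{n-1}}$ is recovered and the induction resumes. You should also check that the segment never drops below size $2$ before the length-$2$ step (it does not, since in the all-diagonal regime $k_{k+2}=k+1\geq i+1$), so that move 2.b is always applicable. The paper sidesteps this entire analysis by packing the diamond into Lemma~\ref{D'}; that is the main thing its factorization buys.
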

\begin{proof}
Consider a factorization $\psi=\psi'\psi^{mj}$, where $m$ is some integer satisfying $i\prec m \preceq j$ and $\psi'\in \Psi_{\ve_i-\ve_m}$.  Choose $m$ to be the smallest possible. In the factorization $\psi^{ij}=\psi^{im}\psi^{mj}$ let $\phi$ be the rightmost Chevalley factor in $\psi^{im}$, while $\phi'$ the
rightmost factor in $\psi'$. Due to the choice of $m$, $\phi\not=\phi'$.
 Further we consider algebras of types $B,C$  separately from  $D$.

In diagrams of  types $B$ and $C$, all arrows have length $1$, Fig.\ref{D_nonprin}.a.
All nodes in the north-east rectangle $CDIH$ are the same as in $D_{\psi^{ij}}$. Therefore $CDGF$ is in $ V_{j-1}$, by Lemma \ref{D'}.
 Since $\phi'\not=\phi$, the left move via $\phi'$ maps $CF$ onto $BE$, modulo
$CF\subset  V_{j-1}$, proving $BE\subset  V_{j-1}$. Applying diagonal moves to $BE$ we get
the triangle $ADE \subset V_{j-1}$ including the node $A$, which is $w^i_i=w_i\tp \psi v_\la$.
\begin{figure}[H]
\caption{}
\label{D_nonprin}
\begin{center}
\begin{picture}(105,125)
\put(105,0 ){\line(0,1){105}}\put(0,105){\line(1,0){105}}
\thinlines\put(0,105){\line(1,-1){105}}
\thinlines\put(15,105){\line(1,-1){90}}

\put(45,105){\line(0,-1){45}}\put(60,105){\line(0,-1){60}}
\put(105,60){\line(-1,0){45}}\put(105,45){\line(-1,0){45}}

\thicklines\put(105,60){\vector(0,-1){15}}
\thicklines\put(60,105){\vector(-1,0){15}}
\thicklines\put(60,60){\vector(-1,0){15}}

\put(0,107){$\scriptscriptstyle{i}$}\put(107,103){$\scriptscriptstyle{i}$}
\put(107,0){$\scriptscriptstyle{j}$}\put(103,107){$\scriptscriptstyle{j}$}
\put(58,107){$\scriptscriptstyle{m}$}\put(107,43){$\scriptscriptstyle{m}$}
\put(50,111){$\scriptstyle{\phi'}$}\put(109,52){$\scriptstyle{\phi}$}
\put(-2,98){$\scriptscriptstyle{A}$}\put(38,98){$\scriptscriptstyle{B}$}\put(53,98){$\scriptscriptstyle{C}$}
\put(98,98){$\scriptscriptstyle{D}$}
\put(38,54){$\scriptscriptstyle{E}$}
\put(55,54){$\scriptscriptstyle{F}$}
\put(98,60){$\scriptscriptstyle{G}$}
\put(100,39){$\scriptscriptstyle{I}$}
\put(55,39){$\scriptscriptstyle{H}$}
\put(50,0){a)}
\end{picture}
\hspace{20pt}
\begin{picture}(105,125)
\put(105,0){\line(0,1){105}}\put(105,75){\line(0,1){30}}
\put(0,105){\line(1,0){30}}\put(60,105){\line(1,0){45}}
\thinlines\put(0,105){\line(1,-1){105}}
\thinlines\put(15,105){\line(1,-1){90}}
\put(60,105){\line(0,-1){60}}
\put(60,75){\circle{1.5}}

\put(90,105){\line(0,-1){90}}\put(75,105){\line(0,-1){75}}\put(30,105){\line(0,-1){30}}%\put(60,105){\line(0,-1){60}}
\put(105,15){\line(-1,0){15}}\put(105,30){\line(-1,0){15}}

\thicklines\put(105,30){\vector(0,-1){15}}\thicklines\put(90,105){\vector(-1,0){15}}\thicklines\put(90,30){\vector(-1,0){15}}
\thicklines\qbezier(30,105)(45,120)(60,105)\thicklines\put(33,108){\vector(-1,-1){3}}
\thicklines\qbezier(30,75)(45,90)(60,75)\thicklines\put(33,78){\vector(-1,-1){3}}

\thicklines\qbezier(105,75)(120,60)(105,45)\thicklines\put(108,48){\vector(-1,-1){3}}
\thicklines\qbezier(60,75)(75,60)(60,45)\thicklines\put(63,48){\vector(-1,-1){3}}

\put(0,107){$\scriptscriptstyle{i}$}\put(107,103){$\scriptscriptstyle{i}$}
\put(107,0){$\scriptscriptstyle{j}$}\put(103,107){$\scriptscriptstyle{j}$}
\put(88,107){$\scriptscriptstyle{m}$}\put(107,13){$\scriptscriptstyle{m}$}
\put(80,111){$\scriptstyle{\phi'}$}\put(109,23){$\scriptstyle{\phi}$}
\put(-2,98){$\scriptscriptstyle{A}$}\put(68,98){$\scriptscriptstyle{B}$}\put(83,98){$\scriptscriptstyle{C}$}
\put(98,98){$\scriptscriptstyle{D}$}
\put(68,24){$\scriptscriptstyle{E}$}
\put(83,24){$\scriptscriptstyle{F}$}
\put(98,31){$\scriptscriptstyle{G}$}
\put(100,10){$\scriptscriptstyle{I}$}
\put(83,10){$\scriptscriptstyle{H}$}
\put(50,0){b)}
\end{picture}
\hspace{20pt}
\begin{picture}(105,125)
\put(105,0){\line(0,1){105}}
\put(0,105){\line(1,0){45}}\put(75,105){\line(1,0){30}}
\thinlines\put(0,105){\line(1,-1){105}}
\thinlines\put(15,105){\line(1,-1){90}}

\put(75,105){\line(0,-1){75}}\put(45,105){\line(0,-1){45}}
\put(105,45){\line(-1,0){30}}\put(105,30){\line(-1,0){30}}

\thicklines\put(105,45){\vector(0,-1){15}}
\thicklines\qbezier(45,105)(60,120)(75,105)\thicklines\put(48,108){\vector(-1,-1){3}}
\thicklines\qbezier(45,60)(60,75)(75,60)\thicklines\put(48,63){\vector(-1,-1){3}}

\put(0,107){$\scriptscriptstyle{i}$}\put(107,103){$\scriptscriptstyle{i}$}
\put(107,0){$\scriptscriptstyle{j}$}\put(103,107){$\scriptscriptstyle{j}$}
\put(73,107){$\scriptscriptstyle{m}$}\put(107,28){$\scriptscriptstyle{m}$}
\put(60,116){$\scriptstyle{\phi'}$}\put(109,38){$\scriptstyle{\phi}$}
\put(-2,98){$\scriptscriptstyle{A}$}\put(38,98){$\scriptscriptstyle{B}$}\put(68,98){$\scriptscriptstyle{C}$}
\put(98,98){$\scriptscriptstyle{D}$}
\put(38,54){$\scriptscriptstyle{E}$}
\put(75,58){$\scriptscriptstyle{F'}$}\put(68,39){$\scriptscriptstyle{F}$}
\put(98,60){$\scriptscriptstyle{G}$}
\put(100,39){$\scriptscriptstyle{I}$}
\put(68,25){$\scriptscriptstyle{H}$}
\put(50,0){c)}
\end{picture}
\end{center}
\end{figure}
Now we look at the type $D$.  We can assume that $i\leqslant n-1,n'+1\leqslant j$, since otherwise
this case reduces to already considered. If the length of $\phi'$ is $1$, the reasoning is the same as
above. The only difference is that one may have to use a diagonal move of length $2$ in transition from $BE$ to $A$, see
Fig.\ref{D_nonprin}.b.  If the length of $\phi'$ is  $2$, then the transition to $BE$ is performed via $\phi'$ applied to
 $CF'\subset  V_{j-1}$, as shown
on Fig.\ref{D_nonprin}.c. This proves that $BE \subset  V_{j-1}$.
Further, all horizontal arrows in the triangle $ABE$ are of length $1$ (the factor $f_{\al_n}$ enters $\psi$ only once).
This situation is similar to the types $B$ and $C$ considered earlier. Thus, the node
$A=w^i_i=w_i\tp \psi v_\la$,  belongs to $ V_{j-1}$.
\end{proof}
For $i\preccurlyeq j$ denote by $|\!\!|i-j|\!\!|$  the distance (the number of arrows in a path) from $i$ to $j$ on the Hasse diagram
of the natural representation of $U_q(\g_-)$.
\begin{propn}
\label{principal_diag}
Suppose that $i,j\in I$ are such that $i\prec j$. Then
\be
w_i\tp \psi^{ij}v_\la &=&(-1)^{|\!\!|i-j|\!\!|} q^{-(\la,\vt_{ij})}w_{j}\tp v_\la \mod  V_{j-1}.
\ee
\end{propn}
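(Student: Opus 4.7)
I would proceed by downward induction on $k$, establishing a recursion $w_k^k \equiv -c_k\, w_{k+1}^{k+1} \mod V_{j-1}$ for $k=i,\ldots,j-1$ (with $w_{k+1}$ replaced by $w_{k+2}$ when $\phi_k$ is a length-$2$ arrow), and then composing them to reach $w_j^j$.

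The starting observation is that $w_k\tp v_\la\in V_k\subseteq V_{j-1}$, so since $V_{j-1}$ is a $U_q(\g)$-submodule, $\psi^{k,j}(w_k\tp v_\la)\in V_{j-1}$. Now expand $\Delta(\psi^{k,j})(w_k\tp v_\la)$ via $\Delta(\phi_l)=\phi_l\tp q^{-h_{\al_l}}+1\tp\phi_l$, obtaining a sum over subsets $S$ of the set of factor indices. Since $\phi_l w_k=0$ for $l>k$ (the generator $\phi_l$ acts nontrivially only on the source $w_l$ of its arrow) and the rightmost factor of $\prod_{l\in S}\phi_l$ must act on $w_k$ first, only the subsets $S=\emptyset$ and $S=\{k\}$ contribute.

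The $S=\emptyset$ term is exactly $w_k^k$; the $S=\{k\}$ term, after commuting $q^{-h_{\al_k}}$ past $\phi_{k+1},\ldots,\phi_{j-1}$ (picking up factors $q^{(\al_k,\al_l)}$) and acting on $v_\la$, equals $c_k\cdot w_{k+1}^{k+1}$ with $c_k=q^{-(\al_k,\la+\ve_j-\ve_{k+1})}$. Hence $w_k^k+c_k\,w_{k+1}^{k+1}=\psi^{k,j}(w_k\tp v_\la)\in V_{j-1}$, which gives the recursion. Composing from $k=i$ to $k=j-1$ accumulates a sign $(-1)^{|\!\!|i-j|\!\!|}$ (one minus per Chevalley factor, i.e.\ per arrow in the Hasse diagram of the natural representation) and a product $\prod c_k$ that telescopes via $\sum_l\al_l=\ve_i-\ve_j$ to match the claimed factor $q^{-(\la,\vt_{ij})}$ (the residual constant term in the exponent being absorbed into the scalar part of $\vt_{ij}\in \h^*+\C$).

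The main technical obstacle will be to push this through in the length-$2$ case ($\phi_k=f_{\al_n}$ for $\g=\s\o(2n)$, which skips the intermediate index $k+1$ in the chain of nodes $v^r$) and in the case of $\g=\s\o(2n+1)$ where $f_{\al_n}$ occurs twice in $\psi^{ij}$ across the middle of the weight diagram. In those configurations the combinatorial core—that only the leftmost factor $\phi_k$ can process $w_k$—remains intact, so the recursion still fires; what requires verification is that the intermediate target node becomes $w_{k+2}^{k+2}$ where appropriate and that the $q$-scalars collect correctly into the advertised exponent $-(\la,\vt_{ij})$.
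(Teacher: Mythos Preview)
Your approach has a genuine gap. The claim that ``$\phi_l w_k=0$ for $l>k$'' is false as soon as the path $\psi^{kj}$ crosses the middle of the natural representation, which is exactly the interesting case for types $B$, $C$, $D$. A simple root generator $f_\alpha$ acts as $\sum_{(l,r)\in P(\alpha)} e_{rl}$ and therefore typically appears \emph{twice} in the Hasse diagram, once on each side. Concretely, for $\g=\s\p(4)$ with $\psi^{1,4}=\phi_1\phi_2\phi_3=f_{\al_1}f_{\al_2}f_{\al_1}$, the factor $\phi_3=f_{\al_1}$ satisfies $\phi_3 w_1=w_2\neq 0$. In the expansion of $\Delta(\psi^{1,4})(w_1\tp v_\la)$ the subset $S=\{2,3\}$ gives (up to a $q$-scalar) $w_3\tp f_{\al_1}v_\la=w_3\tp\psi^{3,4}v_\la=w_3^3$, and $S=\{1,2,3\}$ gives $w_4\tp v_\la$. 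Both are principal-diagonal nodes, not in $V_{j-1}$. So your coproduct expansion does \emph{not} reduce to the two terms $w_k^k$ and $w_{k+1}^{k+1}$; it drags in further diagonal nodes, and your ``combinatorial core'' collapses precisely in the configurations you flagged as technical but claimed would still work.

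The paper's argument avoids this by never expanding a long product. It uses Lemma~\ref{D'} to place the single off-diagonal node $w_i\tp\psi^{kj}v_\la$ (the node immediately above the diagonal in the $i$-th row) into $V_{j-1}$, and then applies \emph{one} Chevalley generator $f_\al$ with $(i,k)\in P(\al)$. The coproduct then gives exactly two terms, $w_k\tp q^{-h_\al}\psi^{kj}v_\la$ and $w_i\tp\psi^{ij}v_\la$, yielding the clean one-step recursion $w_i^i\equiv -q^{-(\la,\al)-(\al,\ve_j-\ve_k)}\,w_k^k \bmod V_{j-1}$. Iterating along the path produces the stated sign and scalar. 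If you want to salvage your approach, you would need to control \emph{all} the extra terms---some via Proposition~\ref{principal_diagram1}, but others (the principal-diagonal ones) by a more elaborate triangular system---which is considerably more work than the paper's one-generator step.
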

\begin{proof}
Suppose that $\al\in \Pi^+$ and $(i,k)\in P(\al)$. By Lemma \ref{D'}, the node $w_{i}\tp \psi^{kj}v_\la\in D'_\psi$ lies
in $V_{j-1}$
Applying $\Delta f_\al= f_\al\tp q^{-h_\al}+1\tp f_\al$ to $w_{i}\tp \psi^{kj}v_\la$ we get
\be
w_i\tp \psi^{ij}v_\la &=&q^{-(\la,\al)-(\al,\ve_j-\ve_k)}w_{k}\tp \psi^{kj}v_\la =q^{-(\la,\vt_{ij}-\vt_{kj})}w_{j}\tp \psi^{kj}v_\la \mod  V_{j-1}
\nn
\ee
for all $k\preccurlyeq j$. Here we used $f_\al w_i=w_k$ and $f_\al\psi^{kj}=\psi^{ij}$ for all $k\preccurlyeq j$.
Proceeding recursively along the path from $i$ to $j$ with the boundary condition $\vt_{jj}=0$ we complete the proof.
\end{proof}
\subsection{Generalized parabolic Verma modules}
Fix a generalized Levi subalgebra $\k\subset \g$ and a weight $\la\in \Cg^*_{\k,reg}$.
Let $M^\h_\la$ denote the Verma module of highest weight $\la$.
For each $\al \in \Pi^+_\k$, there is  a singular vector $v_{\la-\al} \in M^\h_\la$ generating a submodule $M^\h_{\la-\al}\subset M^\h_{\la}$, cf. Section \ref{Sec:RedShapInf}.  Set  $M^\k_\la$ to be the quotient of $M^\h_\la$ by
the submodule $\sum_{\al \in \Pi^+_\k}M^\h_{\la-\al}$.

We denote by  $V^\k_\bullet=(V^\k_{i})_{i=1}^N$ the filtration of $\C^N\tp M^\k_\la$ by the  modules  $V^\k_i$  generated by
$w_k\tp v_\la$, $k=1,\ldots,i$. For $\k=\h$ it is the standard filtration considered in the previous sections. Clearly
$V^\k_\bullet$ is obtained from $V^\h_\bullet$ through the projection $\C^N\tp M^\h_\la\to \C^N\tp M^\k_\la$.
Further we show that $V^\k_j/V^\k_{j-1}$ vanishes once $j\in \bar I_\k$ and $q$ is close to $1$.

\begin{propn}
For each $\la\in \Cg^*_{\k,reg}$ there is a neighborhood $\Omega$ of $1$ in $\C$ such that
the submodule $V^\k_j$ is generated by $w_i\tp v_\la$, $i\leqslant j$, $i\in I_\k$, for all $q\in \Omega$.
\end{propn}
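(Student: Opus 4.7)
The plan is to proceed by induction on $j \in \{1,\ldots,N\}$. The base case $j=1$ is automatic, since the highest-weight vector $w_1$ of the natural representation satisfies $e_\al w_1=0$ for every $\al \in \Rm^+$ (no larger weight exists in $\C^N$), hence $1 \in I_\k$.

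For the inductive step, if $j \in I_\k$ then $w_j \tp v_\la$ is among the proposed generators and the statement at level $j$ follows from that at level $j-1$. The substantive case is $j \in \bar I_\k$, where it suffices to establish $w_j \tp v_\la \in V^\k_{j-1}$ for $q$ in some neighborhood of $1$. Pick the unique $i \in I_\k$ such that $w_j$ lies in the irreducible $\k$-submodule of $\C^N$ generated by $w_i$; in particular $i \lessdot j$ and $i<j$. Choose a monomial $\psi_\k \in U_q(\k_-)$ in the $\k$-Chevalley generators $f_\al$, $\al \in \Pi^+_\k$, realising a shortest path from $w_i$ to $w_j$ in the $\k$-Hasse diagram of $\C^N$, so that $\psi_\k w_i = c\, w_j$ with $c \in \C^\times$. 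Expanding $\Delta(\psi_\k)(w_i \tp v_\la)$ via the quantum coproduct, one extracts the leading term $c'\, w_j \tp v_\la$ with $c' = c\, q^{-(\la,\,\ve_i-\ve_j)}$ (nonzero for $q$ near $1$), together with a remainder $R$ whose summands have the form $\phi_1 w_i \tp \phi_2 v_\la$, with $\phi_2$ a non-trivial product of Chevalley generators of $U_q(\k_-)$.

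At $q=1$, the module $M^\k_\la$ degenerates to a classical generalized parabolic Verma module in which $\k_- v_\la = 0$; every such $\phi_2 v_\la$ therefore vanishes, $R=0$, and $w_j \tp v_\la = (c')^{-1}\Delta(\psi_\k)(w_i \tp v_\la)$ lies in the submodule generated by $w_i \tp v_\la$, hence in $V^\k_{j-1}$. To upgrade this to $q$ in a full neighborhood of $1$, I would invoke Lemma \ref{dynamical_roots_deformed} together with the Shapovalov-inverse construction of Section \ref{Sec:RedShapInf}: the singular vector $v_{\la-\al}=\check{f}_{p,p'}v_\la$ is killed in $M^\k_\la$ and, by the lemma, coincides with $-f_\al v_\la$ modulo $\hbar$; iterating this identity along the factors of $\psi_\k$ expresses each summand of $R$ in $M^\k_\la$ as an element of $V^\k_{j-1}$ for $q$ in a suitable neighborhood $\Omega$ of $1$.

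The main technical obstacle is the pseudo-Levi case: then $\Pi^+_\k$ contains $\g$-non-simple roots, the weight space $(M_\la)_{\la-\al}$ is higher-dimensional, and $f_\al v_\la$ is only an approximate singular vector, so the coproduct of the PBW root vector $f_\al$ carries many mixed contributions. Handling this case requires combining the diagrammatic elementary-move machinery of the preceding section with the explicit $\check{f}_{ij}$-formulas to track the $\hbar$-corrections and ensure $R$ remains in $V^\k_{j-1}$ on a neighborhood $\Omega$ that can be chosen uniformly along the induction.
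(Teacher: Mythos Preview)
Your induction framework and the regular-Levi case (where $\Pi^+_\k\subset\Pi^+_\g$ and each $f_\al v_\la$, $\al\in\Pi^+_\k$, genuinely vanishes in $M^\k_\la$) are fine, but the argument does not close in the case you yourself flag. When $\al\in\Pi^+_\k\setminus\Pi^+_\g$, the vector $f_\al v_\la$ is \emph{not} zero in $M^\k_\la$; what vanishes is the singular vector $\check f_{ij}v_\la$, and Lemma~\ref{dynamical_roots_deformed} only says these agree modulo~$\hbar$. Your remainder $R$ therefore survives as a nonzero element of $\C^N\tp M^\k_\la$, and ``iterating the identity along the factors of $\psi_\k$'' does not by itself place it in $V^\k_{j-1}$: the $\hbar$-corrections are linear combinations of tensors $w_k\tp\psi v_\la$ with $\psi$ ranging over $\g$-Chevalley monomials, and you give no mechanism for showing such tensors lie in $V^\k_{j-1}$. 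Moreover, the coproduct of a PBW root vector $f_\al$ for non-simple $\al$ is itself a complicated sum, so even writing down $R$ explicitly is nontrivial. Saying that this ``requires combining the diagrammatic machinery with the $\check f_{ij}$-formulas'' is a description of the problem, not its solution.

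The paper's argument is organised differently and avoids coproduct expansions altogether. Instead of descending from the top $i\in I_\k$ of the $\k$-block, it picks the \emph{greatest} $i$ with $i\lessdot j$, so that $\ve_i-\ve_j=\al$ is a single $\k$-simple root and $w_i\tp\check f_{ij}v_\la=0$ in $\C^N\tp M^\k_\la$ directly. Now $\check f_{ij}$ is a linear combination of $\g$-Chevalley monomials of weight $\ve_i-\ve_j$; Proposition~\ref{principal_diagram1} shows $w_i\tp\psi v_\la\in V^\k_{j-1}$ for every non-principal $\psi$, while Proposition~\ref{principal_diag} shows $w_i\tp\psi^{ij}v_\la$ equals a nonzero multiple of $w_j\tp v_\la$ modulo $V^\k_{j-1}$. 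Lemma~\ref{dynamical_roots_deformed} guarantees the coefficient of $\psi^{ij}$ in $\check f_{ij}$ is nonzero for $q$ in a neighborhood of~$1$, so the vanishing $0=w_i\tp\check f_{ij}v_\la$ forces $w_j\tp v_\la\in V^\k_{j-1}$. The diagram propositions are not an optional refinement here; they are the entire engine, and they operate on the second tensor factor rather than through any coproduct computation.
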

\begin{proof}
For all $j$ denote by $V_j'\subset V^\k_j$ the submodule generated by all $w_i\tp v_\la$ with $i\leqslant j$ and $i\in I_\k$.
We aim to prove that $V_j'=V^\k_j$.

The statement is trivial for $j=1$. Suppose  it is true for all $i<j$. If $j\in I_\k$, then $V^\k_j$ is generated
by $w_j\tp v_\la$ and by $V^\k_{j-1}=V_{j-1}'$, hence the proof.
Suppose that $j\in \bar I_\k$.
Choose the greatest $i$ such that $i\lessdot j$. Then $(i,j)\in P(\al)$ for some $\al \in \Pi^+_\k$.
By Lemma \ref{dynamical_roots_deformed} there exists an open set $\Omega\subset \C$ containing $1$ such that
the principal term in  $\check{f}_{ij}v_\la \simeq v_{\la-\al}$ is not zero for all $q\in \Omega$.
Then $w_j\tp v_\la\simeq w_i\tp \psi^{ij}v_\la\simeq w_i\tp \check{f}_{ij}v_\la =0$
modulo $V^\k_{j-1}$, by Propositions \ref{principal_diag} and \ref{principal_diagram1}.
 By the induction assumption, we conclude that $w_j\tp v_\la\in V_{j-1}'$ and $V^\k_j=V_{j-1}'=V_j'$.
\end{proof}
\noindent
\begin{corollary}
The graded module $\gr V^\k_\bullet$ is isomorphic to the direct sum $\op_{j\in I_\k}V^\k_j/V^\k_{j-1}$.
\label{standard k-filtration}
\end{corollary}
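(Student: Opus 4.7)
The plan I would follow is to recognise that the corollary is a direct bookkeeping consequence of the preceding proposition. By construction $\gr V^\k_\bullet = \op_{j=1}^N V^\k_j/V^\k_{j-1}$, so it is enough to show that the summand vanishes whenever $j \in \bar I_\k$, because the remaining indices $j \in I_\k$ then furnish precisely the asserted decomposition.

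To handle such a $j$, I would invoke the preceding proposition, which exhibits $V^\k_j$ as the $U_q(\g)$-submodule generated by the vectors $w_i \tp v_\la$ with $i \leqslant j$ and $i \in I_\k$. Since $j \notin I_\k$, none of these generators uses $i = j$, so each satisfies $i \leqslant j-1$ and therefore already lies in $V^\k_{j-1}$ by the very definition of the filtration. This forces $V^\k_j \subseteq V^\k_{j-1}$; combined with the tautological opposite inclusion, it gives $V^\k_j = V^\k_{j-1}$, and hence $V^\k_j/V^\k_{j-1} = 0$.

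Retaining only the non-vanishing pieces yields $\gr V^\k_\bullet \simeq \op_{j\in I_\k} V^\k_j/V^\k_{j-1}$, valid on the same neighbourhood $\Omega$ of $1$ produced by the proposition. There is essentially no obstacle at this stage: the substantive work has been absorbed into Lemma \ref{dynamical_roots_deformed} (non-vanishing of the principal term of $\check f_{ij}v_\la$ near $q=1$), together with Propositions \ref{principal_diag} and \ref{principal_diagram1} on principal and non-principal monomials, and the ensuing proposition that trivialises the generators outside $I_\k$. The corollary itself is merely the reformulation of that result in graded language.
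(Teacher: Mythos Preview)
Your proposal is correct and matches the paper's intent: the corollary is stated there without proof, as an immediate consequence of the preceding proposition, and the argument you spell out---that for $j\in \bar I_\k$ the generators of $V^\k_j$ guaranteed by the proposition already lie in $V^\k_{j-1}$, whence the quotient vanishes---is precisely the intended one-line deduction.
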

Recall that the tensor  $\Ru_{21}\Ru$ commutes with $\Delta(x)$ for all $x\in U_q(\g)$, \cite{D}.
\begin{propn}
The invariant operator  $\Q=(\pi\tp \id)(\Ru_{21}\Ru)$ preserves the standard filtration. It is
scalar on each graded component $V^\k_j/V^\k_{j-1}$, $j\in I_\k$, with the eigenvalue
\be
x_j=q^{2(\la+\rho,\ve_j)-2(\rho,\ve_1)+||\ve_j||^2-||\ve_1||^2},
\label{eigenvalues}
\ee
unless $V^\k_j/V^\k_{j-1}\not =\{0\}$.
\end{propn}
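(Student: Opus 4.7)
The plan is to handle all three assertions by a common induction on $j$, using the $U_q(\g)$-equivariance of $\Q$ that follows from $\sigma\Delta(x)\,\Ru=\Ru\,\Delta(x)$ together with the structure of each graded component as a highest weight module. The base case $j=1$ is immediate because $w_1\tp v_\la$ is an honest highest weight vector of weight $\la+\ve_1$, so $V^\k_1$ is a highest weight module on which any $U_q(\g)$-intertwiner acts as a scalar. For the inductive step, assume $\Q(V^\k_{j-1})\subseteq V^\k_{j-1}$ and push $\Q$ down to $\bar\Q$ on $\C^N\tp M^\k_\la/V^\k_{j-1}$. I first verify that $\overline{w_j\tp v_\la}$ is singular at weight $\la+\ve_j$ in the quotient: applying $e_\al$ via the coproduct kills the summand $q^{h_\al}w_j\tp e_\al v_\la$ since $e_\al v_\la=0$ in $M^\k_\la$, and the remaining $\pi(e_\al)w_j\tp v_\la$ is a combination of $w_{j'}\tp v_\la$ with $j'\prec j$, hence in $V^\k_{j-1}$. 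By Corollary~\ref{standard k-filtration} the quotient is filtered by the highest weight modules $V^\k_k/V^\k_{k-1}$ for $k\in I_\k$, $k\geqslant j$, whose highest weights $\la+\ve_k$ satisfy $\ve_k-\ve_j\notin\Z_+\Pi^+$ for $k>j$; consequently $\la+\ve_j$ is not a weight of any later subquotient, and the singular space at weight $\la+\ve_j$ in the quotient is one-dimensional, so $\bar\Q(\overline{w_j\tp v_\la})=c_j\,\overline{w_j\tp v_\la}$ for some scalar $c_j$. This forces $\Q(w_j\tp v_\la)\in V^\k_j$; by equivariance $\Q(V^\k_j)\subseteq V^\k_j$, and the induced $\bar\Q$ acts as the scalar $c_j$ on the entire highest weight module $V^\k_j/V^\k_{j-1}$.

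To identify $c_j$ I would lift the calculation to $\C^N\tp M^\h_\la$, where the reduced Shapovalov inverse of Section~\ref{Sec:RedShapInf} produces the honest highest weight vector $\hat F(w_j\tp v_\la)=w_j\tp v_\la+\sum_{i\prec j}w_i\tp\hat f_{ij}v_\la$ of weight $\la+\ve_j$. The universal identity $\Ru_{21}\Ru=\Delta(C)(C^{-1}\tp C^{-1})$, with $C\in U_q(\g)$ the central quantum Casimir acting on any highest weight module of weight $\mu$ as the scalar $q^{(\mu,\mu+2\rho)}$, applied to $\hat F(w_j\tp v_\la)$ yields
\[
\Ru_{21}\Ru\cdot\hat F(w_j\tp v_\la)=q^{(\la+\ve_j,\la+\ve_j+2\rho)-(\ve_1,\ve_1+2\rho)-(\la,\la+2\rho)}\,\hat F(w_j\tp v_\la),
\]
and expanding the exponent the $\|\la\|^2$ and $2(\la,\rho)$ contributions cancel, leaving $2(\la+\rho,\ve_j)-2(\rho,\ve_1)+\|\ve_j\|^2-\|\ve_1\|^2=\log_q x_j$. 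This is the scalar on the Verma graded component $V^\h_j/V^\h_{j-1}\simeq M_{\la+\ve_j}$, and the $U_q(\g)$-equivariant projection $V^\h_j/V^\h_{j-1}\twoheadrightarrow V^\k_j/V^\k_{j-1}$ preserves the scalar, so $c_j=x_j$.

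The main technical obstacle will be the singular vector verification in the inductive step, namely the claim that $\pi(e_\al)w_j$ is always a combination of $w_{j'}$ with $j'\prec j$, which in type $D$ requires attention at the branching node $\al_n$ of the representation diagram. This is precisely what the partial ordering introduced in Section~\ref{ssecQUEA} and the case analysis underlying Propositions~\ref{principal_diag} and~\ref{principal_diagram1} codify, so no additional machinery is needed beyond what the preceding sections have already developed.
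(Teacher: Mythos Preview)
Your proof is correct and follows essentially the same route as the paper: induction on $j$, scalarity on highest weight subquotients via the factorization $\Q=\Delta(z)(z^{-1}\tp z^{-1})$ with central $z$, and passage from $\k=\h$ to general $\k$ by projection. The paper invokes that factorization at the outset, so the scalarity on each $V^\h_j/V^\h_{j-1}$ is immediate and your separate singular-space one-dimensionality argument (and the appeal to $\hat F$) is not needed, though it is not wrong.
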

\begin{proof}
The operator $\Q$ can be presented as $\Delta(z)(z^{-1}\tp z^{-1})$, for a certain central element $z$, \cite{D1}.
Therefore $\Q$ is a scalar multiple on every submodule and factor module of highest weight of $\hat V_N$.
Now we do induction on $j$. The submodule $V^\h_1$ is of highest
weight, thence it is $\Q$-invariant. Suppose that $V^\h_{j-1}$ is $\Q$-invariant for  $j>1$. Since $\Q$ is scalar on $V_j^\h/ V^\h_{j-1}$, the submodule $V^\h_j$ is $\Q$-invariant.

The eigenvalue of $\Q$ on $V^\h_j/V^\h_{j-1}$ is determined by its highest weight and equal to (\ref{eigenvalues}), for all $j\in I$,
\cite{M1}.
So the proposition is proved for $\k=\h$. The general case is obtained from this by taking projection to $\C^N\tp M^\k_\la$
and applying Corollary \ref{standard k-filtration}.
\end{proof}
It follows that $\Q$ satisfies the polynomial equation $\prod_{j\in I_\k}(\Q-x_j)=0$ on $\C^N\tp M^\k_\la$.
We will not address the issue if  $V^\k_j/V^\k_{j-1}$ survive for all $j\in I_\k$ as we bypass it in what follows.

\section{Representations of quantum conjugacy classes}
In this section we extend the ground field $\C$ to the local ring  $\C[\![\hbar]\!]$ of formal power series in $\hbar$.
 The quantum group $U_\hbar(\g)$ is a completion
of the $\C[q,q^{-1}]$-algebra $U_q(\g)$ in the $\hbar$-adic topology via the extension $q=e^\hbar$. Its Cartan subalgebra $U_\hbar(\h)$ can be  generated by  $h_\al \in \h$ instead of $q^{\pm h_\al}$.

Assuming that $\k$ is fixed, we suppress the corresponding superscripts and write simply  $M_\la=M^\k_\la$ and $V_\bullet=V^\k_\bullet$.
\begin{propn}
\label{M_la is free}
Suppose that $\la\in \Cg^*_{\k,reg}$. Then $M_{\la}$ is $\C[\![\hbar]\!]$-free.
\end{propn}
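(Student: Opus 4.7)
The ambient Verma module $M^\h_\la = U_\hbar(\g_-) v_\la$ is $\C[\![\hbar]\!]$-free by quantum PBW, and each of its weight spaces has finite $\C[\![\hbar]\!]$-rank. The quotient $M_\la = M^\h_\la/N_\la$, where $N_\la = \sum_{\al \in \Pi^+_\k} U_\hbar(\g) \hat v_{\la-\al}$ and $\hat v_{\la-\al} = \check f_{ij} v_\la$ is the normalized singular vector for a chosen $(i,j)\in P(\al)$, inherits a weight decomposition into finitely generated $\C[\![\hbar]\!]$-modules. Since $\C[\![\hbar]\!]$ is a principal ideal domain, it suffices to verify that $M_\la$ is $\hbar$-torsion-free weight space by weight space.

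The next step is to identify the classical limit. By Lemma~\ref{dynamical_roots_deformed}, each $\hat v_{\la-\al}$ reduces modulo $\hbar$ to a non-zero scalar multiple of $f_\al v_\la$ in the classical Verma module $M^\h_\la|_{\hbar=0}$. Consequently, $N_\la/\hbar N_\la$ maps onto the classical submodule $N_\la^0 := \sum_{\al \in \Pi^+_\k} U(\g) f_\al v_\la$, and $M_\la/\hbar M_\la$ is the classical generalized parabolic Verma module $M^\h_\la|_{\hbar=0}/N_\la^0$. This classical quotient admits an explicit $\C$-basis built from PBW monomials in a complement to the left ideal of $U(\g_-)$ generated by $\{f_\al\}_{\al \in \Pi^+_\k}$, with rank equal to that of $(M^\h_\la)_\mu$ minus that of $(N_\la^0)_\mu$ in each weight $\mu$.

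The main obstacle will be to check that $N_\la$ is saturated in $M^\h_\la$, i.e.\ that $\hbar u \in N_\la$ implies $u \in N_\la$. In each weight space, this reduces to showing that the $\C[\![\hbar]\!]$-rank of $(N_\la)_\mu$ coincides with the classical $\C$-dimension of $(N_\la^0)_\mu$, i.e.\ that the Smith normal form of the inclusion $(N_\la)_\mu \hookrightarrow (M^\h_\la)_\mu$ carries no factors of $\hbar$ on the diagonal. The regularity assumption $\la \in \Cg^*_{\k,reg}$ is essential here: it guarantees, via the condition $q^{2(\la+\rho,\al)}=q^{(\al,\al)}$ holding only for $\al \in \Pi^+_\k$, that no extra singular vectors emerge at generic $\hbar$ to collapse the rank of $N_\la$ relative to $N_\la^0$. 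Once this rank equality is established, lifting the classical PBW basis complementary to $N_\la^0$ and applying topological Nakayama (using that $M_\la$ is $\hbar$-adically complete as a quotient of a complete module) promotes it to a $\C[\![\hbar]\!]$-basis of $M_\la$, giving the desired freeness.
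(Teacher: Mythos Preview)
Your reduction to saturation of $N_\la$ in $M^\h_\la$ and the identification $M_\la/\hbar M_\la \simeq M^0_\la$ via Lemma~\ref{dynamical_roots_deformed} are both correct and shared with the paper. The gap is at the step you yourself flag as the main obstacle: you never actually prove the rank equality $\rk_{\C[\![\hbar]\!]} (N_\la)_\mu = \dim_\C (N^0_\la)_\mu$. The sentence about regularity preventing ``extra singular vectors'' from ``collapsing the rank'' is not an argument, and in fact points in the wrong direction. Since $(N_\la)_\mu$ is a submodule of a finite free module, it is free, and its reduction mod $\hbar$ \emph{surjects} onto $(N^0_\la)_\mu$; hence $\rk (N_\la)_\mu \geqslant \dim (N^0_\la)_\mu$ always. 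What must be ruled out is the rank being \emph{too large}, i.e.\ classical syzygies among the $f_\al v_\la$ failing to lift to syzygies among the $\hat v_{\la-\al}$. The presence or absence of other singular vectors in $M^\h_\la\otimes \C(\!(\hbar)\!)$ does not bear on this.

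The paper closes exactly this gap, and does so constructively rather than through rank-counting. The observation is that because $\hat f_{ij}(\la)\equiv -f_\al \bmod \hbar$ for each $\al\in\Pi^+_\k$ (Lemma~\ref{dynamical_roots_deformed}), these elements can replace the corresponding root vectors in a PBW generating set for $U_\hbar(\g_-)$: any family $\{G_\bt\}_{\bt\in\Rm^+}$ with $G_\bt\equiv f_\bt\bmod\hbar$ has ordered monomials reducing mod $\hbar$ to the classical PBW basis, hence (by Nakayama applied weight-by-weight to finite-rank free modules) forms a $\C[\![\hbar]\!]$-basis of $U_\hbar(\g_-)$. With the $\hat f_{ij}(\la)$ among the generators, the left ideal they generate and its complement acquire explicit monomial bases, and freeness of the quotient follows directly. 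Your final paragraph (lift a complementary basis and apply Nakayama) is essentially this argument, but you place it \emph{after} the rank equality as a consequence, whereas it is in fact the mechanism by which the rank equality---equivalently, saturation---is established.
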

\begin{proof}
The proof is similar to \cite{M3}, Proposition 6.2, where it is done for a regular  pseudo-parabolic Verma module over
$U_q\bigl(\s\p(n)\bigr)$.
The crucial observation is that for all $\al\in \Pi^+_\k$ and $\la\in \Cg^*_{\k,reg}$ the vectors
$\hat{f}_{ij}(\la)$ with $(i,j)\in P(\al)$ can
be included in a PBW basis in $U_\hbar(\g_-)$ if the ring of scalars is $\C[\![\hbar]\!]$. This follows from Lemma \ref{dynamical_roots_deformed}.
\end{proof}
\noindent
Proposition \ref{M_la is free} implies that the algebra $\End(M_\la)$ is also $\C[\![\hbar]\!]$-free.
We are going to realize a quantized conjugacy class of the point
$\lim_{\hbar \to 0}q^{2h_\la}\in T^\k_{reg}$ as a subalgebra in $\End(M_\la)$.

Consider the image of the algebra $\C_{\hbar}[G]$ in $\End(M_\la)$ under the composition homomorphism
$$
\C_{\hbar}[G]\to U_q(\g)\to \End(M_\la).
$$
Here the algebra $U_q(\g)$ is extended over $\C[\![\hbar]\!]$.
This representation induces a character, $\chi_\la$,  of the center of $\C_{\hbar}[G]$.
It annihilates the ideal in $\C_\hbar[G]$ generated by the kernel $\chi_\la$
and by the entries of the minimal polynomial of $\Q$ as a linear operator on $\C^N\tp M_\la$.
The center of $\C_\hbar[G]$ is generated by
\be
\label{q-trace}
\tau_k&=&\Tr_{q}(\Q^k):=\Tr\bigl((\pi(q^{2h_\rho})\tp 1)\Q^k\bigr)\in U_\hbar(\g), \quad k=1,2,\ldots,
\nn\\
\tau^-&=&\Tr_q(\Q_+)-\Tr_q(\Q_-), \quad \mbox{for}\quad \g=\s\o(2n).
\nn
\ee
Here $\Q_\pm$ are the images of $\Ru_{21}\Ru$ in $\End(W_\pm)\tp U_q(\g)$, were
$W_\pm\subset \wedge^n(\C^n) $ are finite dimensional irreducible modules of highest weights
$\sum_{i=1}^{n-1} \ve_i\pm\ve_n$.
In the classical limit, this invariant separates two $SO(2n)$-conjugacy classes whose eigenvalues are all distinct from $\pm 1$.
They  are flipped by any inversion $x_i\leftrightarrow x_i^{-1}$, $i=1,\ldots,n $,
and amount to an $O(2n)$-conjugacy class. If $\pm 1$ is in the spectrum, the $O(2n)$-conjugacy class is also an $SO(2n)$-class. In this case, $\tau^-$ is redundant.

\begin{thm}
Let $\k\subset \g$ be a generalized Levi subalgebra,  $\la \in \Cg^*_{\k,reg}$, and
$M_\la=M^\k_\la$  the corresponding generalized parabolic Verma module. Then\\
i) the annihilator of $M_\la$ in $\C_\hbar[G]$ is generated by
$$
\begin{array}{clll}
\hspace{50pt}{\displaystyle\Bigl(\prod_{i\in I_\k}}(\Q-x_i)\Bigr)_{ij},& i,j=1,\ldots,N,
\nn\\[10pt]
\chi_\la(\tau_k)
-
{\displaystyle\sum_{i=1}^N }x_i^k
{\displaystyle\prod_{\al\in \Rm_+}\frac{ q^{(\la+\rho+\ve_i,\al)}-q^{-(\la+\rho+\ve_i,\al)}}{ q^{(\la+\rho,\al)}-q^{-(\la+\rho,\al)}}}
,&k=1,\ldots,N,
\nn\\[10pt]
\chi_\la(\tau^{-})
-
{\displaystyle\prod_{i=1}^n}\bigr(q^{2(\la+\rho,\ve_i)} -q^{-2(\la+\rho,\ve_i)} \bigl),
&\g=\s\o(2n),
\nn
\end{array}
$$
where $x_i$ is given by (\ref{eigenvalues}),\\
ii) the image of $\C_\hbar[G]$ in $\End(M_\la)$ is an equivariant quantization of $\C_\hbar[O_x]$, $x=\lim_{\hbar \to 0} q^{2h_\la}$,\\
iii) this quantization is independent of the choice of initial point and is an exact representation of the unique quantum conjugacy class
of $x$.
\end{thm}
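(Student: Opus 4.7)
The plan is to treat (i) first by exhibiting the listed elements as annihilators, then upgrade to generation by a deformation/flatness comparison; (ii) and (iii) will follow quickly.

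\textbf{Part (i), containment.} By the preceding proposition, $\Q$ preserves the standard filtration $V_\bullet$ and acts by the scalar $x_j$ on each nonzero graded component $V_j/V_{j-1}$ with $j\in I_\k$. Hence $\prod_{i\in I_\k}(\Q-x_i)$ annihilates $\gr V_\bullet$, and therefore all of $\C^N\tp M_\la$; so every matrix entry lies in the annihilator of $M_\la$. The elements $\tau_k$ and $\tau^-$ are central in $\C_\hbar[G]$, hence act on $M_\la$ by scalars; those scalars are computed from the highest weight $\la$ via Harish-Chandra, giving precisely the explicit products of $q$-characters displayed. (For $\tau^-$ the computation is done on the two fundamental spin representations $W_\pm$ as in the construction of $\tau^-$, reducing to a product over $\{\ve_i\}$.) This establishes that all listed generators annihilate $M_\la$.

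\textbf{Part (i), generation.} The main difficulty, as usual in such constructions, is showing that no further relations are hidden. The plan is a classical--quantum comparison. Let $\Jc_\la\subset \C_\hbar[G]$ be the ideal generated by the listed elements and let $\Ann_\la\subset \C_\hbar[G]$ be the full annihilator of $M_\la$. We have $\Jc_\la\subset \Ann_\la$. Taking the classical limit $\hbar\to 0$, the generators in the theorem reduce to the defining equations of the classical conjugacy class $O_x$ written in \cite{DM,AM1}: the matrix entries become the Cayley--Hamilton type minimal polynomial $p(X)$ with roots $\{e^{2(\la,\ve_i)}\}$, and the central character conditions become the values of the trace invariants on that class (and, for $D_n$, the discriminating $SO(2n)$--invariant). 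Hence $\Jc_\la\big|_{\hbar=0}$ is the full defining ideal of $O_x$, i.e.\ $\C_\hbar[G]/\Jc_\la$ modulo $\hbar$ coincides with $\C[O_x]$. By Proposition \ref{M_la is free} and the fact that $\C^N\tp M_\la$ is $\C[\![\hbar]\!]$-free, the quotient $\C_\hbar[G]/\Ann_\la$ is a torsion-free (hence flat) $\C[\![\hbar]\!]$-module of rank $\dim \C[O_x]$. Since $\Jc_\la\subset \Ann_\la$ and both quotients have the same rank with $\Jc_\la=\Ann_\la$ modulo $\hbar$, Nakayama for complete local rings forces $\Jc_\la=\Ann_\la$.

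\textbf{Part (ii).} The image $\C_\hbar[O_x]:=\C_\hbar[G]/\Ann_\la$ in $\End(M_\la)$ is $U_q(\g)$-equivariant because the embedding $\C_\hbar[G]\hookrightarrow U_\hbar(\g)$ is equivariant for the adjoint action and $M_\la$ is a $U_q(\g)$-module. Flatness is established in Part (i). The classical limit identification with $\C[O_x]$ was the essential input used there, so the image is an equivariant quantization of $\C_\hbar[O_x]$.

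\textbf{Part (iii).} Two initial points on the same $K$-orbit of the torus correspond to weights $\la,\la'\in \Cg^*_{\k,reg}$ lying in the same Weyl-group orbit modulo the relevant stabilizer. The eigenvalues $\{x_i\}_{i\in I_\k}$ from (\ref{eigenvalues}) form a multiset that is Weyl-invariant (it is exactly the spectrum of $x=\lim q^{2h_\la}$ viewed in the natural representation), and by Harish-Chandra the central character values $\chi_\la(\tau_k),\chi_\la(\tau^-)$ depend only on the Weyl orbit of $\la+\rho$. Therefore the ideal $\Jc_\la$ in the theorem depends only on the Weyl orbit, and by Part (i) so does $\Ann_\la$. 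Consequently the quotient $\C_\hbar[G]/\Ann_\la$ is intrinsically attached to the class of $x$, and the modules $M_\la$, $M_{\la'}$ support the same quantum algebra of functions on $O_x$ realized by different highest weight representations. Exactness of the representation is automatic from the construction $\C_\hbar[O_x]=\C_\hbar[G]/\Ann_\la\hookrightarrow \End(M_\la)$.

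The main obstacle is the flatness/rank argument in Part (i): one must know, independently of the quantization, that $\C_\hbar[G]/\Ann_\la$ is $\C[\![\hbar]\!]$-free of the expected rank. This is where Proposition \ref{M_la is free} together with the structure of $\C^N\tp M_\la$ obtained from Corollary \ref{standard k-filtration} does the heavy lifting; without the precise control over $\gr V_\bullet$ provided by the standard filtration analysis, the generation statement cannot be closed.
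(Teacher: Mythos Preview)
Your containment argument in Part~(i) is correct, and the overall plan---compare $\Jc_\la$ and $\Ann_\la$ through their classical limits---is sound. But the generation step has a real gap: you assert that $\C_\hbar[G]/\Ann_\la$ has ``rank $\dim\C[O_x]$'' and that ``$\Jc_\la=\Ann_\la$ modulo $\hbar$'', and neither is justified; both are equivalent to what you are trying to prove. Torsion-freeness of $\C_\hbar[G]/\Ann_\la$ (which does follow from Proposition~\ref{M_la is free}) only tells you that its reduction modulo $\hbar$ is some nonzero $G$-equivariant quotient of $\C[O_x]$. The missing ingredient is that $\C[O_x]$, being the coordinate ring of a single $G$-orbit, has no proper $G$-invariant ideals, so this quotient must be all of $\C[O_x]$. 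Once you have that, the kernel $K=\Ann_\la/\Jc_\la$ maps to zero in $\C[O_x]$, hence lies in $\hbar\cdot(\C_\hbar[G]/\Jc_\la)$; torsion-freeness of the target then forces $K=\hbar K$, and Nakayama on the finite-rank isotypic components gives $K=0$. Without this simplicity step your Nakayama appeal is circular.

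The paper takes a different route: instead of arguing directly for arbitrary $\k$, it invokes the already-established \emph{regular} case $\k_0$ from \cite{M1,M3,M4,AM1} and reduces via a Weyl group element $\sigma$ with $\Rm^+_\k=\sigma(\Rm^+_{\k_0})$. The shifted action $\la_0\mapsto\sigma(\la_0+\rho)-\rho=\la$ preserves the central character and carries $\{x_i\}_{i\in I_{\k_0}}$ bijectively to $\{x_i\}_{i\in I_\k}$, so the displayed generators for $(\k_0,\la_0)$ and $(\k,\la)$ are literally the same elements of $\C_\hbar[G]$; thus $\Jc_\la=\Ann(M^{\k_0}_{\la_0})$ is already known. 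The filtration analysis gives $\Jc_\la\subset\Ann(M^\k_\la)$, and injectivity of $\C_\hbar[G/K_0]\to\End(M^\k_\la)$ is then obtained from precisely the simplicity argument above. This reduction delivers~(iii) for free, since every $M^\k_\la$ with Weyl-conjugate data carries a faithful representation of the \emph{same} algebra $\C_\hbar[G/K_0]$; your Part~(iii) reaches the same conclusion by direct Weyl-invariance of the generators, which is exactly the observation underlying the paper's reduction.
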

\begin{proof}
The statements i) and ii) for all types of classes are proved in \cite{M1,M3,M4,AM1}, for certain regular $\k=\k_0$.
For arbitrary $\k$ there is an element $\si$ of the Weyl group such that $\Rm^+_\k=\si(\Rm^+_{\k_0})$.
The shifted action $\la_0\mapsto  \si(\la_0+\rho)-\rho=\la $ takes $\C^*_{\k_0,reg}$ to
$\C^*_{\k,reg}$. It  preserves the central characters and takes the set of  eigenvalues of $\Q$ on $\C^N\tp \hat M_{\la_0}^{\k_0}$ to eigenvalues on $\C^N\tp \hat M_{\la}^\k$.
Moreover, $\si\{x_i\}_{i\in I^{\k_0}}=\{x_i\}_{i\in I^{\k}}$ as $\si$ relates the orderings $\lessdot$ relative to $\k_0$ and $\k$. This implies that the annihilator of $M_\la^{\k_0}$ in $\C_\hbar[G]$
vanishes on  $M_\la^\k$, that is, there is an equivariant homomorphism $\C_\hbar[G/K_0]\to \End(M_\la^\k)$.
In order to complete the proof, we need to show that this homomorphism is an embedding.

Since $\C_\hbar[G/K_0]$ is a direct sum of $\C[\![\hbar]\!]$-finite isotypic $U_\hbar(\g)$-components and $\End(M_\la^\k)$ is $\C[\![\hbar]\!]$-free, the image of $\C_\hbar[G/K_0]$ is $\C[\![\hbar]\!]$-free. The algebra $\C[G/K_0]$ has no proper invariant ideals, hence the kernel
of the map $\C_\hbar[G/K_0]\to \End(M_\la^\k)$ is zero. This completes the proof.
\end{proof}
\vspace{1cm}
{\bf Acknowledgements}.
This research is supported in part by RFBR grant 15-01-03148.
The author is grateful to the Max-Planck Institute for Mathematics, Bonn for hospitatlity.

\end{document}